\numberwithin{equation}{section}
\newtheorem{thm}[equation]{Theorem}
\newtheorem{cor}[equation]{Corollary}
\newtheorem{lem}[equation]{Lemma}
\newtheorem{prop}[equation]{Proposition}
\theoremstyle{definition}
\newtheorem{defn}[equation]{Definition}
\newtheorem{rem}[equation]{Remark}
\newcommand*{\m}[1]{\underline{#1}}
\newcommand*{\BAR}[1]{\overline{#1}}
\newcommand*{\HAT}[1]{\widehat{#1}}
\newcommand{\RE}{\operatorname{Re}}
\newcommand{\CO}{\operatorname{Co}}
\newcommand{\QU}{\operatorname{Qu}}
\newcommand{\IM}{\operatorname{Im}}
\newcommand{\GL}{\operatorname{GL}}
\newcommand{\SO}{\operatorname{SO}}
\newcommand{\U}{\operatorname{U}}
\newcommand{\Sp}{\operatorname{Sp}}
\newcommand{\OO}{\operatorname{O}}
\newcommand{\crxi}[1]{\partial_{x_{#1}}}
\newcommand{\crzi}[1]{\partial_{z_{#1}}}
\begin{document}

\title{Cauchy-Riemann Operators in Octonionic Analysis}
\author{Janne Kauhanen%
\footnote{Laboratory of Mathematics, Faculty of Natural Sciences, Tampere University of Technology, Finland. Electronic address: \texttt{janne.kauhanen@tut.fi}}
\ and Heikki Orelma%
\footnote{Electronic address: \texttt{heikki.orelma@tut.fi}}}

\maketitle

\begin{abstract}
In this paper we first recall the definition of an octonion algebra and its algebraic properties. We derive the so called $e_4$-calculus and using it we obtain the list of generalized Cauchy-Riemann systems in octonionic monogenic functions. We define some bilinear forms and derive the  corresponding symmetry groups. 
\end{abstract}
\textbf{Mathematics Subject Classification (2010).} 30G35, 15A63\\
\\
\textbf{Keywords.} Octonions, Cauchy-Riemann operators, Monogenic functions

\section{Introduction}
The algebra of octonions is a well known non-associative division algebra. The second not so well known feature is, that we may define a function theory, in spirit of classical theory of complex holomorphic functions, and study its properties. This theory has its limitations, since the multiplication is neither commutative nor associative. The first part of this paper is a survey of known results, where we give a detailed definition for the octonions. We derive the so called ''$e_4$-calculus'' on it, to make our practical calculations easier. Then we recall the notion of the Cauchy-Riemann operator. A function in its kernel is called monogenic. To find explicit monogenic functions directly from the definitions is too complicated, because the algebraic properties give too many limitations. To give an explicit characterization of monogenic functions, we separate variables, or represent the target space as a direct sum of subalgebras. Using this trick we obtain a list of real, complex, and quaternionic partial differential equation systems, which are all generalizations of the complex Cauchy-Riemann system. These systems allow us to study explicit monogenic functions. We compute an example, assuming that the functions are biaxially symmetric.

Authors like to emphasize, that this work is the starting point for our future works on this fascinating field of mathematics. A reader should notice, that although the algebraic calculation rules look  really complicated, one may still derive a practical formulas to analyze the properties of the quantities of the theory. It seems that there are two possible ways to study the octonionic analysis in our sense. In the first one, one just takes results from classical complex or quaternionic analysis and tries to prove them. The second one is to concentrate to algebraic properties and features of the theory, and try to find something totally new, in the framework of the algebra. We believe that the latter gives us deeper intuition of the theory,  albeit the steps forward are not always so big.

\section{On Octonion Algebra}
In this section we recall the definition for the octonions and study its algebraic properties. We develop the so called $e_4$-calculus, which we will use during the rest of the paper to simplify practical computations. Also classical groups related to the octonions are studied.

\subsection{Definition of Octonions}
Let us denote the field of complex numbers by $\mathbb{C}$ and the skew field of quaternions by $\mathbb{H}$. We assume that the complex numbers are generated by the basis elements $\{1,i\}$ and the quaternions by $\{1,i,j,k\}$ with the well known defining relations
\[
i^2=j^2=k^2=ijk=-1.
\]
We expect that the reader is familiar with the complex numbers and the quaternions. We give \cite{CS,L,P} as a basic reference. The so called octonions or Cayley numbers were first defined defined in 1843 by John T. Graves. Nowadays the systematic way to define octonions is the so called Cayley-Dickson construction, which we will use also in this paper. See historical remarks on ways to define the octonions in \cite{B}.

The Cayley-Dickson construction produces a sequence of algebras over the field of real numbers, each with twice the dimension of the previous one. The previous algebra of a Cayley-Dickson step is assumed to be an algebra with a conjugation. Starting from the algebra of real numbers $\mathbb{R}$ with the trivial conjugation $x\mapsto x$, the Cayley-Dickson construction produces the algebra of complex numbers $\mathbb{C}$ with the conjugation $x+iy\mapsto x-iy$. Then applying Cayley-Dickson construction to the complex numbers produces quaternions $\mathbb{H}$ with the conjugation. The quaternion conjugation is given as follows. An arbitrary $x\in\mathbb{H}$ is of the form
\[
x=x_0+\m{x}
\]
where $x_0\in\mathbb{R}$ is the real part and $\m{x}=x_1i+x_2j+x_3k$ is the vector part of the quaternion $x$. Vector parts are isomorphic to the three dimensional Euclidean vector space $\mathbb{R}^3$. Then the conjugation of $x$ obtained from the Cayley-Dickson construction is denoted by $\BAR{x}$ and defined by
\[
\BAR{x}=x_0-\m{x}.
\]
Now the Cayley-Dickson construction proceeds as follows. Consider pairs of quaternions, i.e., the space $\mathbb{H}\oplus\mathbb{H}$. We define the multiplication for the pairs as
\[
(a,b)(c,d)=(ac-\BAR{d}b, da+b\BAR{c})
\] 
where $a,b,c,d\in\mathbb{H}$. With this multiplication the pairs of quaternions $\mathbb{H}\oplus\mathbb{H}$ is an eight dimensional algebra generated by the elements
\begin{align*}
e_0:=(1,0),\ e_1:=(i,0),\ e_2:=(j,0),\ e_3:=(k,0),\\
e_4:=(0,1),\ e_5:=(0,i),\ e_6:=(0,j),\ e_7:=(0,k).
\end{align*}
Denoting $1:=e_0$ and using the definition of the product, we may write the following table.
\begin{center}
\begin{tabular}{c|cccccccc}
 & $1$   & $e_1$  & $e_2$  & $e_3$  & $e_4$  & $e_5$  & $e_6$  & $e_7$\\
\hline
$1$     & $1$   & $e_1$  & $e_2$  & $e_3$  & $e_4$  & $e_5$  & $e_6$  & $e_7$\\
$e_1$   & $e_1$ & $-1$ & $e_3$  & $-e_2$ & $e_5$  & $-e_4$ & $-e_7$ & $e_6$\\
$e_2$   & $e_2$ & $-e_3$ & $-1$ & $e_1$  & $e_6$  & $e_7$  & $-e_4$ & $-e_5$\\
$e_3$   & $e_3$ & $e_2$  & $-e_1$ & $-1$ & $e_7$  & $-e_6$ & $e_5$  & $-e_4$\\
$e_4$   & $e_4$ & $-e_5$ & $-e_6$ & $-e_7$ & $-1$   & $e_1$  & $e_2$  & $e_3$\\
$e_5$   & $e_5$ & $e_4$  & $-e_7$ & $e_6$  & $-e_1$ & $-1$ & $-e_3$ & $e_2$\\
$e_6$   & $e_6$ & $e_7$ & $e_4$  & $-e_5$ & $-e_2$ & $e_3$ & $-1$  & $-e_1$\\
$e_7$   & $e_7$ & $-e_6$ & $e_5$  & $e_4$ & $-e_3$ & $-e_2$ & $e_1$  & $-1$
\end{tabular}
\end{center}
We see that $e_0=1$ is the unit element of the algebra. Using the table, it is an easy task to see that the algebra is not associative nor commutative. Also we see that elements $\{1,e_1,e_2,e_3\}$ generates a quaternion algebra, i.e., $\mathbb{H}$ is a subalgebra. We will consider more subalgebras later.

The preceding algebra  is called the \emph{algebra of octonions} and it is denoted by $\mathbb{O}$. An arbitrary $x\in\mathbb{O}$ may be represented in the form
\[
x=x_0+\m{x}
\]
where $x_0\in\mathbb{R}$ is the real part of the octonion $x$ and 
\[
\m{x}=x_1e_1+x_2e_2+x_3e_3+x_4e_4+x_5e_5+x_6e_6+x_7e_7,
\]
where $x_1,...,x_7\in\mathbb{R}$ is the vector part. Vector parts are isomorphic to the seven dimensional Euclidean vector space $\mathbb{R}^7$. The whole algebra of octonions is naturally identified as a vector space with $\mathbb{R}^8$.  The Cayley-Dickson construction produces also naturally in $\mathbb{O}$ a conjugation $(a,b)^*:=(\BAR{a},-b)$, where $\BAR{a}$ is the quaternion conjugation. Because there is no risk of confusion, we will denote the conjugation of $x\in\mathbb{O}$ by $\BAR{x}$. Using the definition, we have
\[
\BAR{x}=x_0-\m{x}.
\]
We refer \cite{B,CS,H} for more detailed description to the preceding construction.

\subsection{Algebraic Properties}
In this subsection we collect some algebraic properties and results of the octonions to better understand its algebraic structure.

\begin{prop}[$\mathbb{O}$ is an alternative division algebra, \cite{P}]\label{ALT}
If $x,y\in \mathbb{O}$ then
\[
x(xy)=x^2y,\ (xy)y=xy^2,\ (xy)x=x(yx),
\]
and each non-zero $x\in\mathbb{O}$ has an inverse.
\end{prop}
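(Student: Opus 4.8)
The plan is to prove the three identities (the two \emph{alternative laws} and the \emph{flexible law}) by exploiting the structure of the Cayley--Dickson construction, and then to deduce the existence of inverses from the multiplicativity of the norm. I would first record the two facts that make the computation tractable: every octonion $x = x_0 + \m{x}$ has a real part $x_0$ that is central (it commutes and associates with everything, since it is a real scalar multiple of the unit $e_0$), and the quaternion conjugation satisfies $\BAR{x} = 2x_0 - x$. Consequently each identity that is linear in $x$ need only be checked on the vector part $\m{x}$: the real part contributes only scalar multiples that associate trivially, so without loss of generality I may replace $x$ by a purely imaginary octonion in each of the three relations.

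For the alternative laws $x(xy) = x^2 y$ and $(xy)y = xy^2$, I would use the Cayley--Dickson multiplication rule $(a,b)(c,d) = (ac - \BAR{d}b,\, da + b\BAR{c})$ directly. Writing $x = (a,b)$, $y = (c,d)$ with $a,b,c,d \in \mathbb{H}$, I would expand both sides of $x(xy) = x^2 y$ and compare the two $\mathbb{H}$-components. The left-hand computation produces expressions in which the key simplifications come from the quaternionic identities $\BAR{a}a = a\BAR{a} = |a|^2 \in \mathbb{R}$ and the fact that $\mathbb{H}$ itself is associative; the cross terms cancel precisely because of how conjugation interacts with the product, i.e. $\BAR{uv} = \BAR{v}\,\BAR{u}$. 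The second identity $(xy)y = xy^2$ follows by the same expansion, and I would note it can alternatively be obtained from the first by applying the conjugation/reversal symmetry of the algebra rather than recomputing from scratch.

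For the flexible law $(xy)x = x(yx)$, the cleanest route is to observe that it is equivalent to the vanishing of the associator $[x,y,x] := (xy)x - x(yx)$, and that the associator of an alternative algebra is known to be an alternating (totally antisymmetric) function of its three arguments. I would establish alternativity of the associator from the two laws already proved: $x(xy) = x^2 y$ says $[x,x,y]=0$ and $(xy)y = xy^2$ says $[x,y,y]=0$, and since the associator is trilinear, polarizing these gives $[x,y,z] = -[y,x,z]$ and $[x,y,z] = -[x,z,y]$; antisymmetry in the outer two slots then yields $[x,y,x] = -[x,y,x] = 0$, which is exactly the flexible law. This is the conceptually cleanest step and avoids a third brute-force expansion.

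The main obstacle I anticipate is purely bookkeeping: the Cayley--Dickson expansions for the alternative laws generate eight quaternionic products per side, and verifying that the differences collapse to zero requires careful tracking of where conjugations fall and repeated use of $\BAR{a}a \in \mathbb{R}$ commuting past everything. Finally, for the existence of inverses I would introduce the norm $N(x) = x\BAR{x} = x_0^2 + x_1^2 + \cdots + x_7^2 \in \mathbb{R}_{\geq 0}$, check via the first alternative law that $x\bigl(N(x)^{-1}\BAR{x}\bigr) = N(x)^{-1}\bigl(x\BAR{x}\bigr) = 1$ (and symmetrically on the other side using the flexible law), so that $x^{-1} = N(x)^{-1}\BAR{x}$ whenever $x \neq 0$, i.e. whenever $N(x) > 0$. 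The one subtlety worth flagging is that because $\mathbb{O}$ is non-associative one must verify the inverse works on \emph{both} sides and that the computation $x(x^{-1}y) = y$ is legitimate, which is precisely where the alternative laws (rather than full associativity) are indispensable.
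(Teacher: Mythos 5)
The paper does not actually prove Proposition \ref{ALT}: it is stated with a citation to Porteous \cite{P} and used as a black box, so there is no internal argument to compare yours against. Your plan is a correct and essentially standard self-contained proof. The Cayley--Dickson expansion of $x(xy)=x^2y$ with $x=(a,b)$, $y=(c,d)$ does collapse as you predict: the cross terms combine via $a+\BAR{a}=2\RE(a)$ and $\BAR{b}b=|b|^2$, both central in $\mathbb{H}$, and the second law follows either by the same expansion or, as you note, by applying the anti-automorphism $\BAR{xy}=\BAR{y}\,\BAR{x}$ (which is proved independently of alternativity, so there is no circularity). The derivation of the flexible law from the alternating associator is clean and correct: polarizing $[x,x,y]=0$ and $[x,y,y]=0$ gives antisymmetry in the adjacent slots, and the outer swap is an odd permutation, so $[x,y,x]=-[x,y,x]=0$. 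Two small remarks. First, you say the identities are ``linear in $x$'' when justifying the reduction to purely imaginary $x$; they are quadratic in $x$, and the reduction works not by linearity but because the cross terms $2x_0\,\m{x}y$ involve a real scalar and hence associate --- in any case you never actually use this reduction, since you expand with general $(a,b)$. Second, for the bare existence of a two-sided inverse, alternativity is not strictly needed: $1$, $x$ and $\BAR{x}$ span a commutative associative subalgebra, so $x\BAR{x}=\BAR{x}x=|x|^2$ gives $x^{-1}=\BAR{x}/|x|^2$ directly; alternativity becomes essential only for the stronger division-algebra statements such as $x(x^{-1}y)=y$, which you correctly flag. This is a point worth keeping, since the paper's own formula $x^{-1}=\BAR{x}/|x|^2$ appears right after the proposition.
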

We see that the associativity holds in the case $(xy)x=x(yx)$. Unfortunately, this is almost the only non-trivial case when the associativity holds:

\begin{prop}[\cite{CS}]
If 
\[
x(ry)=(xr)y
\]
for all $x,y\in\mathbb{O}$, then $r$ is real.
\end{prop}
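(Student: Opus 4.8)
The hypothesis says precisely that $r$ lies in the \emph{nucleus} of $\mathbb{O}$: writing the associator $[a,b,c] := (ab)c - a(bc)$, it reads $[x, r, y] = 0$ for all $x, y \in \mathbb{O}$, and the plan is to deduce that the vector part $\m{r}$ vanishes. First I would record two elementary reductions. Since the octonionic product is $\mathbb{R}$-bilinear, the associator is $\mathbb{R}$-trilinear; and since $e_0 = 1$ is a two-sided identity we have $[x,1,y] = xy - xy = 0$, so the real part $r_0$ never contributes. Hence, writing $r = r_0 + \m{r}$ with $\m{r} = \sum_{k=1}^7 r_k e_k$, the hypothesis is equivalent to the system
\[
\sum_{k=1}^{7} r_k\,[e_i, e_k, e_j] = 0 \qquad (i,j = 1,\dots,7)
\]
obtained by testing $x = e_i$ and $y = e_j$ on basis units, the value on $1$ being automatic.

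The core of the argument is then to choose a handful of pairs $(e_i, e_j)$ for which the resulting associators point in linearly independent directions, so that this linear system admits only the trivial solution. The structural fact I would read off from the multiplication table is that, for a fixed pair of distinct imaginary units $e_i, e_j$, the associator $[e_i, e_k, e_j]$ is nonzero exactly for the four units $e_k$ lying outside the quaternionic triple generated by $e_i$ and $e_j$, and that these four values are, up to sign and a factor $2$, four \emph{distinct} basis units. For example, taking $(i,j) = (1,2)$ one computes $[e_1, e_4, e_2] = -2e_7$, $[e_1, e_5, e_2] = 2e_6$, $[e_1, e_6, e_2] = -2e_5$, $[e_1, e_7, e_2] = 2e_4$, so the corresponding equation reads $2(r_7 e_4 - r_6 e_5 + r_5 e_6 - r_4 e_7) = 0$, and by independence of $e_4, e_5, e_6, e_7$ this yields $r_4 = r_5 = r_6 = r_7 = 0$ in one stroke.

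With these coefficients eliminated, I would repeat the computation for one or two further pairs to remove the remaining three. Taking $(i,j) = (4,5)$ gives $[e_4, e_2, e_5] = -2e_3$ and $[e_4, e_3, e_5] = 2e_2$ (the $e_6, e_7$ contributions already vanishing), whence $r_2 = r_3 = 0$; and taking $(i,j) = (2,4)$ gives $[e_2, e_1, e_4] = -2e_7 \neq 0$ with every other surviving term zero, whence $r_1 = 0$. Thus $\m{r} = 0$ and $r = r_0$ is real.

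The only genuine obstacle is bookkeeping: because the multiplication is non-associative and the table is unwieldy, one must evaluate each triple product carefully and, more delicately, arrange the choices of $(e_i, e_j)$ so that the nonzero associators fall in independent directions. One can sidestep most of this computation conceptually: since $\mathbb{O}$ is alternative (Proposition \ref{ALT}), linearizing the laws $x(xy) = x^2 y$ and $(xy)y = xy^2$ shows that $[a,b,c]$ is an alternating function of its arguments, so $[x,r,y] = -[r,x,y]$ and the hypothesis amounts to $r$ lying in the nucleus; one then needs only the existence of a single non-associating triple of units to force every imaginary direction to die. I would nonetheless present the explicit three-pair computation as the cleanest self-contained route.
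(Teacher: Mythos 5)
The paper does not actually prove this proposition; it is stated as a quoted fact with a citation to Conway--Smith, so there is no internal argument to compare yours against. Your proof is correct and self-contained, and it supplies exactly what the paper omits. I checked your associator values against the paper's multiplication table: $[e_1,e_4,e_2]=-2e_7$, $[e_1,e_5,e_2]=2e_6$, $[e_1,e_6,e_2]=-2e_5$, $[e_1,e_7,e_2]=2e_4$, $[e_4,e_2,e_5]=-2e_3$, $[e_4,e_3,e_5]=2e_2$, and $[e_2,e_1,e_4]=-2e_7$ all come out as you state, and the vanishing of $[e_i,e_k,e_j]$ when $e_k$ lies in the quaternion subalgebra generated by $e_i,e_j$ is automatic since that subalgebra is associative. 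The two reductions (trilinearity of the associator, and $[x,1,y]=0$ killing the real part of $r$) are sound, so the three chosen pairs do force $r_1=\cdots=r_7=0$. The only soft spot is the closing conceptual aside: the claim that a \emph{single} non-associating triple of units suffices to annihilate every imaginary direction is not justified as written --- it implicitly needs that the nucleus is a subalgebra stable under the automorphism group $G_2$, which acts transitively on unit imaginary octonions --- but since you explicitly adopt the explicit three-pair computation as the actual proof, this does not affect correctness. One presentational suggestion: state once that you read the table as (row)$\cdot$(column), since every sign in the computation depends on that convention.
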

So we see, that use of parentheses is something what we need to keep in mind, when we compute using the octonions. The alternative properties given in Proposition \ref{ALT} implies the following identities.

\begin{prop}[Moufang Laws, \cite{CS,M}] For each $x,y,z\in\mathbb{O}$
\[
(xy)(zx)=(x(yz))x=x((yz)x).
\]
\end{prop}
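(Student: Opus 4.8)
The second equality $(x(yz))x = x((yz)x)$ is immediate: it is exactly the flexible law $(ab)a = a(ba)$ from Proposition \ref{ALT}, applied with $a = x$ and $b = yz$. So the whole content of the statement is the \emph{middle Moufang identity} $(xy)(zx) = x((yz)x)$, and I would prove it inside an arbitrary alternative algebra, using only the three identities of Proposition \ref{ALT}.

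The plan is to work with the associator $(a,b,c) := (ab)c - a(bc)$. First I would record that it is an \emph{alternating} trilinear form. Indeed, the left-alternative law $x(xy) = x^2 y$ says $(x,x,y)=0$, and polarizing $x \mapsto x+z$ gives $(x,z,y) + (z,x,y) = 0$; likewise the right-alternative law gives antisymmetry in the last two slots, and the flexible law $(x,y,x)=0$ gives antisymmetry in the outer two slots. Hence $(a,b,c)$ changes sign under every transposition of its arguments, and is therefore invariant under cyclic permutations.

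Next I would reduce the target to a single associator identity. Expanding the left-hand side by inserting associators,
\[
(xy)(zx) = \big((xy)z\big)x - (xy,z,x) = \big(x(yz)\big)x + (x,y,z)\,x - (xy,z,x),
\]
and since $(x(yz))x = x((yz)x)$ by flexibility, the middle Moufang identity is equivalent to the crux relation $(xy,z,x) = (x,y,z)\,x$. This is where the real work lies, and it is the step I expect to be the main obstacle. A single application of a universal expansion such as the Teichm\"uller identity $(ab,c,d) - (a,bc,d) + (a,b,cd) = a(b,c,d) + (a,b,c)d$ turns out to be tautological once flexibility is used, so the alternating property must be exploited essentially, by combining several polarized instances of the alternative laws. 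Concretely, I would rewrite $(xy,z,x) = -(x,z,xy)$ using antisymmetry, expand the factor $xy$ through the Teichm\"uller identity, and repeatedly collapse the resulting associators using $(a,b,a)=0$ together with the linearized operator identities $L_aL_b + L_bL_a = L_{ab}+L_{ba}$ and $R_aR_b+R_bR_a = R_{ab}+R_{ba}$, until the right-hand side $(x,y,z)\,x$ is recovered.

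Finally, I would note that the same machinery yields the left and right Moufang identities in operator form, $L_{(xy)x} = L_xL_yL_x$ and $R_{(xy)x} = R_xR_yR_x$, from which the middle identity also follows; routing the argument through the crux relation above keeps the computation shortest, and the verification matches the treatments in \cite{CS,M}.
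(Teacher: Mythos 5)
First, a point of comparison: the paper does not prove this proposition at all --- it is stated with citations to \cite{CS,M} and only the remark that it follows from the alternative laws of Proposition \ref{ALT} --- so your proposal is measured against the literature rather than against an in-paper argument. Within that framework everything you assert is true: the second equality is exactly flexibility with $b=yz$; the associator $(a,b,c)=(ab)c-a(bc)$ of an alternative algebra is alternating by the polarizations you describe; and the middle Moufang identity reduces precisely to the relation $(xy,z,x)=(x,y,z)x$, which is a correct identity (e.g.\ for $x=e_1$, $y=e_2$, $z=e_4$ both sides equal $-2e_6$).

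The gap is that this crux relation --- which, as you say yourself, carries all the content --- is never actually established. ``Expand through Teichm\"uller and repeatedly collapse until the right-hand side is recovered'' is a search strategy, not a verifiable argument. It can be closed, and with exactly the tools you name, but it takes three instances of the Teichm\"uller identity $(ab,c,d)-(a,bc,d)+(a,b,cd)=a(b,c,d)+(a,b,c)d$ rather than one. Taking $(a,b,c,d)=(x,y,z,x)$, using $(x,yz,x)=0$ and $(y,z,x)=(x,y,z)$, gives
\[
(xy,z,x)+(x,y,zx)=x(x,y,z)+(x,y,z)x.
\]
Taking $(a,b,c,d)=(z,x,x,y)$, whose right-hand side vanishes, and rewriting $(zx,x,y)=(x,y,zx)$ and $(z,x,xy)=(xy,z,x)$ by the alternating property, gives $(x,y,zx)+(xy,z,x)=(x^2,y,z)$; comparison with the display above yields $(x^2,y,z)=x(x,y,z)+(x,y,z)x$. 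Finally, taking $(a,b,c,d)=(y,z,x,x)$ gives $-(x,y,zx)+(x^2,y,z)=(x,y,z)x$, whence $(x,y,zx)=x(x,y,z)$, and the first display then collapses to $(xy,z,x)=(x,y,z)x$, as required. With this inserted your proof is complete and matches the standard treatment; without it, the proposal proves only the easy equality $(x(yz))x=x((yz)x)$ and leaves the Moufang identity itself unproved.
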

The inverse element $x^{-1}$ of non-zero $x\in\mathbb{O}$ may be computed as follows. We define the norm by $|x|=\sqrt{x\BAR{x}}=\sqrt{\BAR{x}x}$. A straightforward computation shows that the norm is well defined and
\[
|x|^2=\sum_{j=0}^7 x_j^2.
\]
In addition,
\[
x^{-1}=\frac{\BAR{x}}{|x|^2}.
\]
An important property of the norm is the following.

\begin{prop}[$\mathbb{O}$ is a composition algebra, \cite{CS,H,P}]
The norm of $\mathbb{O}$ satisfies the \emph{composition law}
\[
|xy|=|x||y|
\]
for all $x,y\in\mathbb{O}$.
\end{prop}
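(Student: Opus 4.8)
The plan is to prove the equivalent squared identity $|xy|^2=|x|^2|y|^2$ and then extract square roots, using that $|\cdot|$ is nonnegative. Write $N(x):=|x|^2=x\BAR{x}=\BAR{x}x$, which is a nonnegative real number, so the goal becomes $N(xy)=N(x)N(y)$. The first preparatory step is to establish that conjugation is an \emph{anti}-automorphism, i.e. $\overline{xy}=\BAR{y}\,\BAR{x}$. Since this is bilinear in $x$ and $y$, it suffices to check it on the basis $\{1,e_1,\dots,e_7\}$: the cases involving $e_0=1$ are immediate, and for distinct imaginary units one uses $\BAR{e_i}=-e_i$ together with the anticommutativity $e_ie_j=-e_je_i$ (read off from the multiplication table) to obtain $\overline{e_ie_j}=-e_ie_j=e_je_i=\BAR{e_j}\,\BAR{e_i}$.

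With this in hand, I would start from
\[
N(xy)=(xy)\overline{(xy)}=(xy)(\BAR{y}\,\BAR{x}).
\]
The key device is to replace each conjugate by $\BAR{x}=2x_0-x$, where $x_0=\RE(x)$ is real and hence central and associative with everything. Expanding $\BAR{x}=2x_0-x$ in the rightmost factor gives
\[
(xy)(\BAR{y}\,\BAR{x})=2x_0\,(xy)\BAR{y}-(xy)(\BAR{y}x).
\]
For the first term, expanding $\BAR{y}=2y_0-y$ and invoking the right alternative law $(xy)y=xy^2$ from Proposition \ref{ALT} yields $(xy)\BAR{y}=x(y\BAR{y})=N(y)\,x$. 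For the second term, the Moufang law $(xy)(zx)=x((yz)x)$ with $z=\BAR{y}$ gives $(xy)(\BAR{y}x)=x((y\BAR{y})x)=N(y)\,x^2$. Substituting back,
\[
N(xy)=2x_0N(y)\,x-N(y)\,x^2=N(y)\,(2x_0x-x^2)=N(y)\,x\BAR{x}=N(x)N(y),
\]
and taking square roots completes the proof.

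The main obstacle is purely the failure of associativity: one cannot regroup products at will, so every manipulation must be funneled through the few identities available, namely the right alternative law (Proposition \ref{ALT}) and the Moufang law stated above, together with the centrality of the real part. The trick that makes this work is systematically rewriting conjugates via $\BAR{x}=2x_0-x$, which converts the conjugated factors into real-scalar combinations of $x$ and $y$ to which the alternative and Moufang identities directly apply; without this reduction the bare product $(xy)(\BAR{y}\,\BAR{x})$ does not obviously match any single associativity-type identity.
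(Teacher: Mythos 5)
Your proof is correct, but it is worth noting that the paper itself offers no proof of this proposition at all: it is stated with citations to \cite{CS,H,P}, and the surrounding text even reverses the logical order you use, remarking that the identity $\BAR{xy}=\BAR{y}\,\BAR{x}$ is a \emph{consequence} of the composition law. Your argument therefore supplies a genuinely self-contained derivation from ingredients the paper does state independently of the norm, namely the right alternative law $(xy)y=xy^2$ of Proposition \ref{ALT} and the Moufang identity $(xy)(zx)=x((yz)x)$; crucially, you avoid circularity by establishing $\BAR{xy}=\BAR{y}\,\BAR{x}$ directly from bilinearity and the multiplication table rather than from multiplicativity of the norm (you should also record the trivial diagonal case $\overline{e_ie_i}=\overline{-1}=-1=(-e_i)(-e_i)$, which your ``distinct imaginary units'' phrasing skips). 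The substitution $\BAR{x}=2x_0-x$ is exactly the right device for funneling $(xy)(\BAR{y}\,\BAR{x})$ through those two identities, and the computation $(xy)\BAR{y}=N(y)x$, $(xy)(\BAR{y}x)=N(y)x^2$, $2x_0x-x^2=N(x)$ checks out. What your approach buys over the paper's citation is a short proof entirely internal to the paper's toolkit; what it costs is the basis-by-basis verification of the conjugation anti-automorphism, which the paper would instead obtain for free once the composition law is assumed.
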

We will say that octonions has a \emph{multiplicative norm}. The composition law has  algebraic implications for conjugation, since the conjugation may be written using the norm in the form $\BAR{x}=|x+1|^2-|x|^2-1-x$. 

\begin{prop}[\cite{CS}]If $x,y\in\mathbb{O}$, then
\[
\BAR{\BAR{x}}=x\ \text{ and }\ \BAR{xy}=\BAR{y}\,\BAR{x}.
\]
\end{prop}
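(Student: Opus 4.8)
The plan is to prove the two identities by reducing them to facts about the Cayley-Dickson construction. I will prove $\BAR{\BAR{x}}=x$ first, since it is nearly immediate, and then establish $\BAR{xy}=\BAR{y}\,\BAR{x}$ by a combination of linearity and the composition law.

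For the involution property $\BAR{\BAR{x}}=x$, I would use the explicit coordinate form of the conjugation derived in the construction. Writing $x=x_0+\m{x}$, the conjugation is $\BAR{x}=x_0-\m{x}$, so applying it twice gives $\BAR{\BAR{x}}=x_0-(-\m{x})=x_0+\m{x}=x$. Equivalently, one may argue at the level of the Cayley-Dickson pair: if $x=(a,b)$ then $\BAR{x}=(\BAR{a},-b)$, and applying the rule again yields $(\BAR{\BAR{a}},-(-b))=(a,b)=x$, using that the quaternion conjugation is already an involution.

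For the reversing property $\BAR{xy}=\BAR{y}\,\BAR{x}$, I would avoid grinding through the multiplication table and instead use the norm. The key remark stated just before this proposition is that $\BAR{x}=|x+1|^2-|x|^2-1-x$; combined with the composition law $|xy|=|x||y|$ and the fact that $|x|^2=x\BAR{x}$ defines a nondegenerate quadratic form with associated bilinear form $\langle x,y\rangle=\tfrac12(x\BAR{y}+y\BAR{x})$, one gets that conjugation is the reflection fixing $1$ and negating the orthogonal complement. A clean route is to polarize the composition law to obtain $\langle xz,yz\rangle=|z|^2\langle x,y\rangle$ and similar scaling identities, from which the relation $\overline{xy}=\BAR{y}\,\BAR{x}$ follows for all $x,y$; alternatively, since both sides are real-bilinear in $x$ and $y$, it suffices to verify the identity on the basis elements $e_0,\dots,e_7$ using the multiplication table, which is a finite (if tedious) check. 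I would present the bilinear-form argument as the conceptual proof and note that the basis verification is an available alternative.

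The main obstacle is the reversing property: one must be careful that non-associativity does not invalidate the polarization manipulations, since expressions like $\langle xz,yz\rangle$ implicitly rely on how products associate under the bilinear form. The safe resolution is that the identities actually needed, namely $\langle xy,z\rangle=\langle y,\BAR{x}z\rangle$ and $\langle xy,z\rangle=\langle x,z\BAR{y}\rangle$, hold in any composition algebra and are precisely the statements that left and right multiplication are adjoint to multiplication by the conjugate; these follow from polarizing $|xy|=|x||y|$ without ever needing a triple product to reassociate. Granting these adjunction relations, $\overline{xy}$ is characterized by $\langle \overline{xy},z\rangle=\langle 1, z\,\overline{xy}^{\,*}\rangle$-type identities that force $\overline{xy}=\BAR{y}\,\BAR{x}$, completing the argument.
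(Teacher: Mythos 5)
Your proposal is correct and follows the paper's own (unexecuted) approach: the paper supplies no proof of this proposition, remarking only that the identities are ``easy to prove by brute force computations'' and are ``consequences of the composition laws,'' which are exactly the two routes you sketch. The one soft spot is the last step of your conceptual argument, where the passage from the adjunction relations to $\BAR{xy}=\BAR{y}\,\BAR{x}$ is asserted rather than derived (the standard way is to combine $\BAR{x}=2\RE(x)-x$ with the linearized quadratic relation coming from $x^2-2\RE(x)\,x+|x|^2=0$); your basis-element verification is in any case a complete fallback, so no essential gap remains.
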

These formulas are easy to prove by brute force computations. But the reader should notice, that actually they are consequences of the composition laws, not directly related only to octonions. In general we say that an algebra $A$ is a composition algebra, if it has a norm $N\colon A\to\mathbb{R}$ such that  $N(ab)=N(a)N(b)$ for all $a,b\in A$. We know that $\mathbb{R}$, $\mathbb{C}$, $\mathbb{H}$ and $\mathbb{O}$ are composition algebras. It is an interesting algebraic task to prove that actually this list is complete. 

\begin{thm}[Hurwitz, \cite{CS}]
$\mathbb{R}$, $\mathbb{C}$, $\mathbb{H}$ and $\mathbb{O}$ are the only composition algebras.
\end{thm}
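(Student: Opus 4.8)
The plan is to work in the standard setting of Hurwitz's theorem: $A$ is a finite-dimensional real algebra with unit $1$ carrying a positive-definite norm $N(x)=|x|^2$ that satisfies the composition law $N(xy)=N(x)N(y)$. (If the unit is not postulated a priori, one first uses nondegeneracy of the polarized form to produce a two-sided identity.) The whole argument is driven by the symmetric bilinear form obtained by polarizing $N$, together with the observation that the four algebras in the statement arise, each from the previous one, by the Cayley-Dickson doubling described in the construction above. The goal is to show that no proper composition algebra can live outside this chain, and that doubling $\mathbb{O}$ breaks the composition law.

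First I would introduce the inner product $\langle x,y\rangle=\tfrac12\bigl(N(x+y)-N(x)-N(y)\bigr)$, so that $N(x)=\langle x,x\rangle$, and linearize the identity $N(xy)=N(x)N(y)$ in each slot. This yields the \emph{composition identities} $\langle xy,xz\rangle=N(x)\langle y,z\rangle$ and $\langle xz,yz\rangle=\langle x,y\rangle N(z)$, together with the fully polarized relation $\langle xy,zw\rangle+\langle xw,zy\rangle=2\langle x,z\rangle\langle y,w\rangle$; these are the only computational input to everything that follows. Next I would define the conjugation $\BAR{x}=2\langle x,1\rangle\,1-x$ and use the identities to verify $x\BAR{x}=\BAR{x}x=N(x)\,1$, $\BAR{\BAR{x}}=x$ and $\BAR{xy}=\BAR{y}\,\BAR{x}$, and then to derive that $A$ is alternative, recovering Proposition~\ref{ALT}. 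This last point is exactly where the composition law pays off, since the quadratic relations $x(xy)=x^2y$ and $(yx)x=yx^2$ fall out once the bilinear identities are in place.

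The core of the proof is a doubling lemma. If $B\subsetneq A$ is a proper unital composition subalgebra, I would pick $j\in B^{\perp}$ with $N(j)=1$, so that $j^2=-1$, and show that $B\oplus Bj$ is again a composition subalgebra whose multiplication is forced to be the Cayley-Dickson rule $(a+bj)(c+dj)=(ac-\BAR{d}b)+(da+b\BAR{c})j$, with additive norm $N(a+bj)=N(a)+N(b)$. Extracting this exact product formula—in particular making the conjugates appear in the correct places—purely from orthogonality of $j$ to $B$ and from the polarized identities is the main obstacle, and it is where one must be most careful, because associativity is not available and every manipulation has to be controlled by the bilinear relations rather than by moving parentheses.

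Finally I would record how algebraic quality degrades under doubling: $B\oplus Bj$ is commutative only if the conjugation on $B$ is trivial (i.e.\ $B=\mathbb{R}\,1$), is associative only if $B$ is commutative and associative, and is alternative only if $B$ is associative. Starting from $\mathbb{R}$ and iterating, this produces in turn $\mathbb{C}$ (commutative, associative), $\mathbb{H}$ (associative, not commutative) and $\mathbb{O}$ (alternative, not associative). Since every composition algebra is alternative by the previous step, a fifth doubling is impossible: the double of $\mathbb{O}$ fails to be alternative, hence cannot be a composition algebra. Because the doubling lemma lets one strictly enlarge any proper composition subalgebra until it exhausts $A$, the dimension of $A$ is forced into $\{1,2,4,8\}$ and $A$ is isometrically isomorphic to one of $\mathbb{R},\mathbb{C},\mathbb{H},\mathbb{O}$. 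The delicate points to keep in view are the a priori existence of the unit and, as stressed, the precise derivation of the Cayley-Dickson product in the doubling lemma.
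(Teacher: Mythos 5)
The paper does not prove this theorem: it is stated with a citation to Conway--Smith, so there is no in-paper argument to compare yours against. Judged on its own terms, your outline is the standard and correct proof of Hurwitz's theorem: polarize the multiplicative norm to obtain the composition identities, define the conjugation $\BAR{x}=2\langle x,1\rangle 1-x$, deduce alternativity, and then run the doubling lemma --- a proper unital composition subalgebra $B$ together with a unit-norm $j\in B^{\perp}$ generates $B\oplus Bj$ carrying the Cayley--Dickson product and the additive norm --- so the dimension is forced into $\{1,2,4,8\}$, and a further doubling of $\mathbb{O}$ is excluded because the double of a non-associative algebra fails to be alternative while every composition algebra must be alternative. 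The key lemmas and their logical order are all correctly identified, and the formula you extract matches the paper's convention $(a,b)(c,d)=(ac-\BAR{d}b,\,da+b\BAR{c})$.

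Two caveats. First, your parenthetical claim that one can always ``produce a two-sided identity'' from nondegeneracy of the polarized form is not correct as stated: there exist non-unital algebras with a positive-definite multiplicative quadratic form (the para-Hurwitz and real pseudo-octonion, i.e.\ Okubo, algebras), and the standard unitalization trick only replaces the product by an isotope rather than exhibiting an identity for the original product. The existence of a unit must therefore be taken as a hypothesis --- as it implicitly is in the paper's loose definition of a composition algebra and explicitly in its reference. Second, the one substantive computation you defer, namely deriving the exact Cayley--Dickson product on $B\oplus Bj$ from orthogonality of $j$ to $B$ and the polarized identities, is the heart of the proof: everything downstream (the degradation of commutativity, associativity and alternativity under doubling) is read off from that formula. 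As a plan this is acceptable because the derivation is standard, but a complete write-up must carry it out.
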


\subsection{$e_4$--Calculus}
In this subsection we study how to compute with the octonions in practise. In principle all of the computations are possible to carry out using the multiplication table. In practise, this often leads to chaos of indices, so it is better to develop another kind of calculation. Our starting point is the observation that every octonion $x\in\mathbb{O}$ may be written in the form
\[
x=a+be_4
\]
where $a,b\in\mathbb{H}$. This form is called the \emph{quaternionic form} of an octonion. If
\[
x=x_0+x_1e_1+\cdots+x_7e_7,
\]
then
\[
a=x_0+x_1e_1+x_2e_2+x_3e_3\ \text{ and }\ b=x_4+x_5e_1+x_6e_2+x_7e_3.
\]
Using the multiplication table, it is easy to prove the following.
\begin{lem}\label{lem:eieje4}
Let $i,j\in\{1,2,3\}$. Then
\begin{enumerate}[(a)]
\item $e_i(e_je_4)=(e_je_i)e_4$,
\item $(e_ie_4)e_j=-(e_ie_j)e_4$,
\item $(e_ie_4)(e_je_4)=e_je_i$.
\end{enumerate}
\end{lem}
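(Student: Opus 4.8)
The plan is to bypass the multiplication table and compute directly from the Cayley-Dickson product $(a,b)(c,d)=(ac-\BAR{d}b,da+b\BAR{c})$, which disposes of all nine index pairs at once in each of the three identities. The whole argument rests on two observations carried over from the construction. First, under the identification $x=a+be_4\leftrightarrow(a,b)$ underlying the quaternionic form, a generator $e_i$ with $i\in\{1,2,3\}$ is the pair $(e_i,0)$, where on the right $e_i$ denotes the corresponding imaginary unit of the quaternion subalgebra $\{1,e_1,e_2,e_3\}\cong\mathbb{H}$, whereas $e_ie_4=(e_i,0)(0,1)=(0,e_i)$. Second, each such $e_i$ is a pure imaginary quaternion, so $\BAR{e_i}=-e_i$.

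With these in hand, each part is a one-line substitution into the product formula. For (a) I would compute $e_i(e_je_4)=(e_i,0)(0,e_j)=(0,e_je_i)$ and separately $(e_je_i)e_4=(e_je_i,0)(0,1)=(0,e_je_i)$, and the two agree. For (b), $(e_ie_4)e_j=(0,e_i)(e_j,0)=(0,e_i\BAR{e_j})=(0,-e_ie_j)$ using $\BAR{e_j}=-e_j$, which matches $-(e_ie_j)e_4=(0,-e_ie_j)$. For (c), $(e_ie_4)(e_je_4)=(0,e_i)(0,e_j)=(-\BAR{e_j}e_i,0)=(e_je_i,0)=e_je_i$, again by $\BAR{e_j}=-e_j$.

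There is no genuinely hard step here; the difficulty is purely bookkeeping, and it is concentrated in two places. The conjugations in the product formula are what turn $e_j$ into $-e_j$ and thereby account for the sign in (b) and the sign cancellation in (c), so one must be careful never to drop a bar. The non-commutativity of $\mathbb{H}$ is what produces the reversed order $e_je_i$ appearing on the right-hand sides of (a) and (c); since all the products $e_ie_j$ and $e_je_i$ occurring here live inside the associative subalgebra $\mathbb{H}$, no parenthesization issue arises and these quaternion products are unambiguous. A purely mechanical alternative would be to verify the $3\times3$ table of cases for each identity against the octonion multiplication table, but the pair computation above is both shorter and more transparent.
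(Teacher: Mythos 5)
Your proof is correct; all six pair computations check out against the Cayley--Dickson formula $(a,b)(c,d)=(ac-\overline{d}b,\,da+b\overline{c})$, and the identifications $e_i=(e_i,0)$, $e_ie_4=(0,e_i)$ are the right ones. The paper, however, does not argue this way: it simply states that the lemma follows ``using the multiplication table,'' i.e.\ it relies on the mechanical $3\times 3$ case check that you mention only as an alternative at the end. Your route is genuinely different and arguably better suited to the lemma's role in the paper: by working directly with the pair formula you dispose of all index pairs (including the case $i=j$, which the lemma permits and which your uniform computation covers since $e_i^2=-1$ is consistent with $(0,e_ie_i)=(0,-1)$) in a single substitution, and the two sources of signs --- the conjugations in the Cayley--Dickson product and the non-commutativity of $\mathbb{H}$ --- are made visible rather than buried in table lookups. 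What the table-checking approach buys in exchange is independence from the construction: it verifies the identities against the algebra as presented by its structure constants, without needing to trust that the table was transcribed correctly from the pair formula. Since the paper derives the table from the very same formula you use, nothing is lost by your choice.
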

Using these, we have
\begin{lem}\label{lem:vecxvecy}
If $\m{a}=a_1e_1+a_2e_2+a_3e_3$ and $\m{b}=b_1e_1+b_2e_2+b_3e_3$, then
\begin{enumerate}[(a)]
\item $e_4\m{a}=-\m{a}e_4$\label{item:vexvecy3}
\item $e_4(\m{a}e_4)=\m{a}$\label{item:vexvecy4}
\item $(\m{a}e_4)e_4=-\m{a}$\label{item:vexvecy5}
\item $\m{a}(\m{b}e_4)=(\m{b}\,\m{a})e_4$\label{item:vexvecy6}
\item $(\m{a}e_4)\m{b}=-(\m{a}\,\m{b})e_4$\label{item:vexvecy7}
\item $(\m{a}e_4)(\m{b}e_4)=\m{b}\,\m{a}$\label{item:vexvecy8}
\end{enumerate}
\end{lem}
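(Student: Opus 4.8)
The plan is to reduce every identity to the basis-level relations of Lemma \ref{lem:eieje4} by writing $\m{a}=\sum_{i=1}^3 a_i e_i$ and $\m{b}=\sum_{j=1}^3 b_j e_j$ and expanding each product by $\mathbb{R}$-bilinearity. The crucial observation is that, although the octonionic product is neither associative nor commutative, it is still distributive over addition and commutes with real scalars; hence every product of two sums expands into a double sum of products of basis elements, with the real coefficients $a_i b_j$ pulled out in front. Once expanded, each individual term matches exactly one of the three relations in Lemma \ref{lem:eieje4}, provided the parentheses are kept in the same position throughout.

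I would first dispose of (a), (b), (c), which involve $e_4$ twice or the bare anticommutation. For (a), expanding $e_4\m{a}=\sum_i a_i(e_4 e_i)$ and reading the anticommutation $e_4 e_i=-e_i e_4$ (for $i\in\{1,2,3\}$) off the multiplication table gives $e_4\m{a}=-\m{a}e_4$ at once. Part (c) is immediate from the alternative law of Proposition \ref{ALT}: $(\m{a}e_4)e_4=\m{a}e_4^2=-\m{a}$, since $e_4^2=-1$. For (b) I would combine these two, using (a) to write $\m{a}e_4=-e_4\m{a}$ and then the alternative law $e_4(e_4\m{a})=e_4^2\m{a}=-\m{a}$, so that $e_4(\m{a}e_4)=-e_4(e_4\m{a})=\m{a}$.

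The remaining identities (d), (e), (f) are the heart of the matter and are handled uniformly by the expansion described above. For (d), $\m{a}(\m{b}e_4)=\sum_{i,j}a_i b_j\,e_i(e_j e_4)$, and applying Lemma \ref{lem:eieje4}(a) termwise turns $e_i(e_j e_4)$ into $(e_j e_i)e_4$, which reassembles into $(\m{b}\,\m{a})e_4$. Identity (e) follows the same pattern using Lemma \ref{lem:eieje4}(b), and identity (f) using Lemma \ref{lem:eieje4}(c); in each case the double sum $\sum_{i,j}a_i b_j(\cdots)$ collapses back into the claimed quaternionic product after the termwise substitution, remembering that $\m{b}\,\m{a}=\sum_{i,j}b_j a_i\,e_j e_i$ and that $a_i b_j=b_j a_i$.

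The main thing to watch, and the only real obstacle, is the bookkeeping of parentheses: because associativity fails, I cannot simplify $e_i(e_j e_4)$ or $(e_i e_4)e_j$ by dropping brackets, and I must ensure that the parenthesization in each expanded term is literally the one appearing in the corresponding clause of Lemma \ref{lem:eieje4} before substituting. A secondary subtlety is the reversal of order (the product $\m{b}\,\m{a}$ rather than $\m{a}\,\m{b}$ on the right-hand sides of (d) and (f)), which arises precisely from the index swap $e_j e_i$ produced by Lemma \ref{lem:eieje4}(a) and (c); tracking this swap carefully is what guarantees the correct factor ordering.
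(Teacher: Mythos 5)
Your proposal is correct and matches the paper's intended argument: the paper gives no written proof, but the phrase ``Using these, we have'' signals exactly the termwise reduction to Lemma \ref{lem:eieje4} via bilinear expansion that you carry out for (d)--(f), and your handling of (a)--(c) via the multiplication table and the alternative laws is a sound way to cover the cases that Lemma \ref{lem:eieje4} does not address. Your attention to preserving parenthesization and to the order reversal $\m{b}\,\m{a}$ is precisely the right bookkeeping.
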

Using preceding formulae, it is easy to obtain similar formulas for quaternions.
\begin{lem}\label{lem:paravecprod}
Let $a,b\in\mathbb{H}$. Then
\begin{enumerate}
\item[(a)] $e_4a=\overline{a}e_4$
\item[(b)] $e_4(ae_4)=-\overline{a}$
\item[(c)] $(ae_4)e_4=-a$
\item[(d)] $a(be_4)=(ba)e_4$
\item[(e)] $(ae_4)b=(a\overline{b})e_4$
\item[(f)] $(ae_4)(be_4)=-\overline{b}a$
\end{enumerate}
\end{lem}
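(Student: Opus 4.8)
The strategy is to write each quaternion $a=a_0+\m{a}$ as a real part plus a vector part $\m{a}=a_1e_1+a_2e_2+a_3e_3$, apply the previous lemma componentwise, and reassemble. All six identities then reduce to the corresponding statements of Lemma~\ref{lem:vecxvecy} together with the trivial behaviour of the real part, so no new octonionic computation is needed beyond bookkeeping. The key observation I would use repeatedly is that $e_4$ is a \emph{vector} unit orthogonal to $1,e_1,e_2,e_3$, so it anticommutes with each $e_i$ (item~(\ref{item:vexvecy3}) of Lemma~\ref{lem:vecxvecy}) but commutes with the real scalar $a_0$; and that $\BAR{a}=a_0-\m{a}$.

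First I would prove (a). Splitting $a=a_0+\m{a}$, linearity gives $e_4a=a_0e_4+e_4\m{a}$. Since $a_0$ is real it commutes, $a_0e_4=e_4a_0$, and by Lemma~\ref{lem:vecxvecy}(\ref{item:vexvecy3}) we have $e_4\m{a}=-\m{a}e_4$, so $e_4a=(a_0-\m{a})e_4=\BAR{a}e_4$. Next, (b) follows by applying $e_4$ on the left to $ae_4$: writing $ae_4=a_0e_4+\m{a}e_4$ and using $e_4(a_0e_4)=a_0\,e_4e_4=-a_0$ (here $e_4^2=-1$ from the table) together with $e_4(\m{a}e_4)=\m{a}$ from Lemma~\ref{lem:vecxvecy}(\ref{item:vexvecy4}), I get $-a_0+\m{a}=-\BAR{a}$. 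Identity (c) is analogous, using $(a_0e_4)e_4=-a_0$ and Lemma~\ref{lem:vecxvecy}(\ref{item:vexvecy5}), giving $-a_0-\m{a}=-a$.

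The product identities (d), (e), (f) require slightly more care because of the cross terms between real and vector parts. For (d), expand $a(be_4)=(a_0+\m{a})\big((b_0+\m{b})e_4\big)$; the pure scalar term $a_0b_0e_4$ and the mixed terms $a_0\m{b}e_4$, $\m{a}b_0e_4$ land directly, while the vector$\times$vector term is handled by Lemma~\ref{lem:vecxvecy}(\ref{item:vexvecy6}), $\m{a}(\m{b}e_4)=(\m{b}\,\m{a})e_4$. Reassembling, the scalar and mixed terms must combine into the scalar/mixed part of $(ba)e_4$, and I would verify that the quaternion product $ba=b_0a_0+b_0\m{a}+a_0\m{b}+\m{b}\,\m{a}$ matches term by term — here the reversed order $ba$ rather than $ab$ is exactly what the vector term forces. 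Identities (e) and (f) proceed the same way, invoking Lemma~\ref{lem:vecxvecy}(\ref{item:vexvecy7}) and (\ref{item:vexvecy8}) respectively and tracking where conjugation enters via the sign flips on the vector parts.

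The main obstacle is purely organizational rather than conceptual: in (d)--(f) one must check that the four pieces (scalar$\times$scalar, scalar$\times$vector, vector$\times$scalar, vector$\times$vector) recombine into a single quaternion product in the claimed order and with the claimed conjugations. The subtlety is that the vector parts reverse order (a quaternionic, not merely octonionic, phenomenon), so the right-hand sides feature $ba$, $a\BAR{b}$, and $\BAR{b}a$ rather than their naive counterparts; keeping the order and the conjugation bars consistent across the scalar and vector contributions is where an error is most likely to creep in. Since every step appeals only to Lemma~\ref{lem:vecxvecy} and the real/vector decomposition $a=a_0+\m{a}$, $\BAR{a}=a_0-\m{a}$, the proof is a routine but attentive verification.
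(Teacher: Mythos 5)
Your proposal is correct and follows exactly the route the paper intends: the paper gives no written proof, stating only that the quaternionic rules follow ``using preceding formulae,'' i.e.\ by splitting $a=a_0+\m{a}$ and reducing each identity to the corresponding item of Lemma~\ref{lem:vecxvecy}, which is precisely your argument. Your term-by-term checks of (d)--(f), including the order reversal $\m{b}\,\m{a}$ and the placement of the conjugation bars, all come out right.
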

The preceding list is called the \emph{rules of $e_4$-calculus for the octonions}. When we compute using octonions, we drop our computations to quaternionic level and use the preceding formulas and associativity. The situation is similar to computing with complex numbers, where we usually compute by real numbers with the relation $i^2=-1$.

\begin{lem}\label{tulo}
Let $x=a_1+b_1e_4$ and $y=a_2+b_2e_4$ be octonions in the quaternionic form. Then their product in quaternionic form is
\begin{equation*}
xy=(a_1a_2-\overline{b}_2b_1)+(b_1\overline{a}_2+b_2a_1)e_4.
\end{equation*}
\end{lem}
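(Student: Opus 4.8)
The plan is to compute the product $xy = (a_1 + b_1 e_4)(a_2 + b_2 e_4)$ by expanding via distributivity into four terms and then reducing each term to quaternionic form using Lemma \ref{lem:paravecprod}. Writing out the expansion, I get
\[
xy = a_1 a_2 + a_1(b_2 e_4) + (b_1 e_4)a_2 + (b_1 e_4)(b_2 e_4).
\]
The first term $a_1 a_2$ is already a quaternion. For the remaining three terms I would apply, in order, part (d) to get $a_1(b_2 e_4) = (b_2 a_1)e_4$, part (e) to get $(b_1 e_4)a_2 = (b_1 \overline{a}_2)e_4$, and part (f) to get $(b_1 e_4)(b_2 e_4) = -\overline{b}_2 b_1$. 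Collecting the terms that are plain quaternions against those carrying the factor $e_4$ then gives
\[
xy = \bigl(a_1 a_2 - \overline{b}_2 b_1\bigr) + \bigl(b_2 a_1 + b_1 \overline{a}_2\bigr)e_4,
\]
which matches the claimed formula.

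The one genuinely delicate point, and the step I expect to be the main obstacle, is the \emph{legitimacy of the distributive expansion itself}. Octonionic multiplication is non-associative, so one must be careful that regrouping is only ever applied where justified. Distributivity over addition does hold in $\mathbb{O}$ (it is an algebra), so the initial split into four summands is valid. The subtle issue is that in forming, say, $(b_1 e_4)(b_2 e_4)$ I am treating $b_1 e_4$ and $b_2 e_4$ as single octonionic factors; the rules in Lemma \ref{lem:paravecprod} are precisely stated for such expressions, so each of parts (d), (e), (f) is applied to a product of two octonions without any implicit reassociation. Thus the whole computation stays inside the regime the $e_4$-calculus was designed for, and no unjustified associativity is invoked.

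Once the expansion is granted, the remainder is a purely mechanical substitution: every factor of the form $(\,\cdot\,)e_4$ or $e_4(\,\cdot\,)$ is rewritten by a single application of the appropriate rule, and the results are sorted by whether or not they end in $e_4$. I would present the computation as a short display chain, applying one lemma part per line, and then read off the real (quaternionic) and $e_4$-components. No further identities beyond Lemma \ref{lem:paravecprod} are needed, and there is no case analysis, so the proof is complete after this collection step.
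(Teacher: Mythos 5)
Your proof is correct and follows exactly the paper's argument: distribute into four terms and reduce the three $e_4$-terms by parts (d), (e), (f) of Lemma \ref{lem:paravecprod}. The paper's proof is the same three-line computation, so nothing needs to change.
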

\begin{proof}
Apply Lemma \ref{lem:paravecprod}:
\begin{align*}
xy&=(a_1+b_1e_4)(a_2+b_2e_4)\\
&=a_1a_2+(b_1e_4)a_2+a_1(b_2e_4)+(b_1e_4)(b_2e_4)\\
&=a_1a_2+ (b_1\overline{a}_2)e_4+(b_2a_1)e_4-\overline{b}_2b_1.\qedhere
\end{align*}
\end{proof}

\begin{lem}\label{lem:octconjnorm}
For an octonion $a+be_4$ in the quaternionic form we have
\begin{align*}
\overline{a+be_4}&=\overline{a}-be_4,\\
|a+be_4|^2&=|a|^2+|b|^2.
\end{align*}
\end{lem}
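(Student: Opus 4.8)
The plan is to verify both identities by writing the octonion in fully expanded coordinate form and reducing everything to quaternionic computations via the rules already established. Recall that for $x=a+be_4$ we have $a=x_0+x_1e_1+x_2e_2+x_3e_3$ and $b=x_4+x_5e_1+x_6e_2+x_7e_3$, both genuine quaternions. The two claims are of quite different flavours: the conjugation formula is essentially a bookkeeping statement about how $\overline{\phantom{x}}$ acts on the $e_4$-component, while the norm formula is the concrete evaluation of $|x|^2=x\overline{x}$ using the product rule from Lemma \ref{tulo}.

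For the conjugation formula, the cleanest route is to recall from the Cayley-Dickson construction that the octonionic conjugate is defined by $(a,b)^*=(\overline{a},-b)$. Since the quaternionic form $a+be_4$ corresponds precisely to the pair $(a,b)$, this immediately reads off as $\overline{a+be_4}=\overline{a}-be_4$, where $\overline{a}$ is the quaternion conjugate. Alternatively, if I want to argue entirely within the $e_4$-calculus, I can expand $\overline{a+be_4}=\overline{a}+\overline{be_4}$ and check the single basis computation $\overline{e_ie_4}=\overline{e_{i+4}}=-e_{i+4}=-e_ie_4$ together with $\overline{e_4}=-e_4$, extending by linearity; then $\overline{be_4}=-be_4$ because conjugation fixes the real part of $b$ (which contributes $+x_4e_4$, whose conjugate is $-x_4e_4$) and negates its vector part (giving the corresponding sign flip), and in all cases the $e_4$-factor picks up the single minus sign. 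Either way this step is short.

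For the norm formula, I would compute $|a+be_4|^2=(a+be_4)\overline{(a+be_4)}$. Using the conjugation formula just proved, $\overline{a+be_4}=\overline{a}-be_4$, so I apply Lemma \ref{tulo} with $y=\overline{a}-be_4$, i.e. $a_1=a$, $b_1=b$, $a_2=\overline{a}$, $b_2=-b$. The product rule gives real part $a\overline{a}-\overline{(-b)}b=a\overline{a}+\overline{b}b=|a|^2+|b|^2$ and $e_4$-part $b\overline{\overline{a}}+(-b)a=ba-ba=0$, which confirms the scalar result $|a+be_4|^2=|a|^2+|b|^2$ with no $e_4$-component, exactly as a norm should behave. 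Here I use that $\overline{\overline{a}}=a$ and that $a\overline{a}=|a|^2$ at the quaternionic level.

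The main obstacle, such as it is, lies only in keeping the quaternionic conjugations and their placement straight when feeding the arguments into Lemma \ref{tulo}: the product formula is not symmetric in its two factors, so the bars land on specific slots, and a misplaced conjugate would spoil the cancellation in the $e_4$-component. Once the substitution $(a_1,b_1,a_2,b_2)=(a,b,\overline{a},-b)$ is made carefully, both the vanishing of the $e_4$-part and the additive splitting of the norm fall out automatically, and the noncommutativity of $\mathbb{H}$ causes no trouble because the two troublesome terms $ba$ cancel identically rather than needing to be reordered.
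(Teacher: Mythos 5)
Your proof is correct and uses exactly the machinery the paper sets up for this purpose (the Cayley--Dickson conjugation $(a,b)^*=(\overline{a},-b)$ for the first identity, and Lemma \ref{tulo} applied to $x\overline{x}$ with $(a_1,b_1,a_2,b_2)=(a,b,\overline{a},-b)$ for the second); the paper itself states the lemma without proof, and this is evidently the intended argument.
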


\subsection{Bilinear Forms and Their Invariance Groups}
In this section we consider some octonion valued bilinear forms and study their invariance groups. Some of the results are not new, but not well known. For the convenience of the reader we give the proofs here.

\subsubsection{On Involutions and Subalgebras}
Our aim here is to study how involutions and subalgebras of octonions are related to each other. We begin with the easiest case.

The real numbers $\mathbb{R}$ may be identified with the real parts of the octonions. Since the conjugation of an octonion $x=x_0+\m{x}$ is defined by $\BAR{x}=x_0-\m{x}$, the real part  $\RE(x)=x_0$ of the octonion $x$ may be computed as
\[
\RE(x)=\frac{1}{2}(x+\BAR{x}).
\]
Because $\RE^2=\RE$, it is a projection of the octonion algebra onto the real numbers. We make the following conclusion.

\begin{thm}
The real numbers $\mathbb{R}$ is a subalgebra of the octonions $\mathbb{O}$ generated by the identity element $\{1\}$. The mapping
\[
\RE\colon\mathbb{O}\to \mathbb{R}
\]
is a projection. 
\end{thm}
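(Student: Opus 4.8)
The statement splits into two independent assertions, and I would treat them in turn. First I would prove that $\mathbb{R}$, identified with the set $\{x_0\cdot 1 : x_0\in\mathbb{R}\}$ of real parts, is a subalgebra generated by $\{1\}$. The plan is to check closure directly: for two real elements $x_0\cdot 1$ and $y_0\cdot 1$ their sum is $(x_0+y_0)\cdot 1$ and, since $e_0=1$ is the unit element of $\mathbb{O}$, their product is $(x_0 y_0)\cdot 1$; both remain real. Hence the real parts form an $\mathbb{R}$-subspace closed under multiplication, i.e.\ a subalgebra. To see that it is exactly the subalgebra generated by $\{1\}$, I would note that any subalgebra containing $1$ must, being an $\mathbb{R}$-subspace, contain every real multiple $x_0\cdot 1$, so it contains $\mathbb{R}$; conversely $\mathbb{R}$ itself is such a subalgebra. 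Therefore $\mathbb{R}$ is the smallest subalgebra containing $1$, which is the claim.

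For the second assertion I would verify that $\RE$, given by $\RE(x)=\tfrac12(x+\BAR{x})$, is an $\mathbb{R}$-linear idempotent with image $\mathbb{R}$. Linearity follows from the additivity of conjugation, which is immediate from $\BAR{x}=x_0-\m{x}$: for $x,y\in\mathbb{O}$ and $\lambda\in\mathbb{R}$ one has $\BAR{x+y}=\BAR{x}+\BAR{y}$ and $\BAR{\lambda x}=\lambda\BAR{x}$, so $\RE$ respects sums and real scalars. For idempotency I would compute $\RE(\RE(x))$: since $\RE(x)=x_0\in\mathbb{R}$ is a real part, it is fixed by conjugation, $\BAR{x_0}=x_0$, whence $\RE(x_0)=\tfrac12(x_0+x_0)=x_0=\RE(x)$. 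Thus $\RE^2=\RE$, and a linear idempotent whose image is $\mathbb{R}$ is by definition a projection onto $\mathbb{R}$.

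The computations here are entirely routine, so there is no genuine obstacle; the only point deserving care is conceptual. Because $\mathbb{O}$ is neither commutative nor associative, one should confirm that these defects play no role for real elements. They do not: the real line lies in the nucleus and centre of $\mathbb{O}$, since $x_0\cdot 1$ acts by ordinary scalar multiplication and hence commutes and associates with everything. Consequently the closure argument uses nothing beyond $1$ being the unit, and the linearity of $\RE$ rests only on the additive behaviour of conjugation rather than on any multiplicative identity.
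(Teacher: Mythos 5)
Your proposal is correct and follows essentially the same route as the paper, which simply observes that $\RE(x)=\tfrac12(x+\BAR{x})$ satisfies $\RE^2=\RE$ and is therefore a projection onto the copy of $\mathbb{R}$ spanned by $1$. You merely spell out the routine verifications (linearity, idempotency, closure, and minimality of the generated subalgebra) that the paper leaves implicit.
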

The complex numbers is a subalgebra of the octonions, and may be generated by any pair $\{1,e_j\}$ where $j=1,...,7.$ To find a canonical one, we define an involution
\[
x^*:=\BAR{a}+\BAR{b}e_4
\]
where $x=a+be_4\in\mathbb{O}$ is in quaternionic form. Then we define the complex part of an octonion by
\[
\CO(x):=\frac{1}{2}(x+x^*)=\RE(a)+\RE(b)e_4.
\]
Since $\CO^2=\CO$, it is a projection of the octonion algebra onto the complex numbers generated by $\{1,e_4\}$. We make the following conclusion.

\begin{thm}
The complex numbers $\mathbb{C}$ is a subalgebra of the octonions $\mathbb{O}$ generated by the elements $\{1,e_4\}$. The mapping
\[
\CO\colon\mathbb{O}\to \mathbb{C}
\]
is a projection. 
\end{thm}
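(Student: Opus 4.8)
The plan is to treat the two assertions of the theorem in turn, following the pattern of the preceding theorem for $\mathbb{R}$. First I would identify the subalgebra generated by $\{1,e_4\}$ explicitly. Let $V:=\{a_0+b_0e_4:a_0,b_0\in\mathbb{R}\}$ be the real span of $1$ and $e_4$. Any subalgebra containing $1$ and $e_4$ must contain all their real linear combinations, hence $V$, so it suffices to check that $V$ is itself closed under multiplication. Reading $e_4^2=-1$ off the multiplication table and expanding a general product gives
\[
(a_0+b_0e_4)(a_0'+b_0'e_4)=(a_0a_0'-b_0b_0')+(a_0b_0'+a_0'b_0)e_4\in V,
\]
so $V$ is the subalgebra generated by $\{1,e_4\}$. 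The assignment $1\mapsto 1$, $e_4\mapsto i$ then matches this product with complex multiplication, giving a real-algebra isomorphism $V\cong\mathbb{C}$. No associativity issue arises, since every product stays inside the two-dimensional commutative algebra $V$; this is consistent with the alternativity recorded in Proposition \ref{ALT}.

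Next I would confirm that $\CO$ actually maps into $V$. Starting from $\CO(x)=\tfrac12(x+x^*)$ with $x=a+be_4$ and $x^*=\BAR{a}+\BAR{b}e_4$, and using $\RE(a)=\tfrac12(a+\BAR{a})$ for a quaternion $a$, a direct expansion yields $\CO(x)=\RE(a)+\RE(b)e_4$. Since $\RE(a)$ and $\RE(b)$ are real, this shows $\CO(x)\in V$ for all $x\in\mathbb{O}$, so $\CO\colon\mathbb{O}\to\mathbb{C}$ is well defined and onto.

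Finally, to establish that $\CO$ is a projection I would verify $\CO^2=\CO$ by showing that $\CO$ fixes $V$ pointwise. If $x=a_0+b_0e_4\in V$ with $a_0,b_0\in\mathbb{R}$, then its quaternionic components $a=a_0$ and $b=b_0$ are real, so $\RE(a)=a_0$ and $\RE(b)=b_0$, whence $\CO(x)=a_0+b_0e_4=x$. Combining this with the previous paragraph, for any $x\in\mathbb{O}$ we have $\CO(x)\in V$ and therefore $\CO(\CO(x))=\CO(x)$, i.e.\ $\CO^2=\CO$. I do not expect a genuine obstacle here: the whole argument is routine once $e_4^2=-1$ is used to close $V$ into a copy of $\mathbb{C}$ and once the formula $\CO(x)=\RE(a)+\RE(b)e_4$ is extracted. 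The only point meriting a little care is keeping the non-associative multiplication of $\mathbb{O}$ from intruding, which is guaranteed because all relevant products occur within the associative subalgebra $V$.
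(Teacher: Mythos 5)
Your proposal is correct and follows essentially the same route as the paper: the paper also defines $x^*=\BAR{a}+\BAR{b}e_4$, computes $\CO(x)=\tfrac12(x+x^*)=\RE(a)+\RE(b)e_4$, and concludes from $\CO^2=\CO$ that $\CO$ is a projection onto the copy of $\mathbb{C}$ spanned by $\{1,e_4\}$. You merely fill in details the paper leaves implicit (closure of the span of $\{1,e_4\}$ under multiplication and the pointwise-fixing argument for $\CO^2=\CO$), which is fine.
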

The quaternions $\mathbb{H}$ is a subalgebra of the octonions $\mathbb{O}$, generated by any $\{1,e_i,e_j,e_ie_j\}$, where $i,j\in\{1,\ldots,7\}$, $i\neq j$. If $x=a+be_4\in\mathbb{O}$ we define an involution
\[
\HAT{x}:=a-be_4.
\]
Using hat, we define the quaternion part of an octonion $x$ as
\[
\QU(x):=\frac{1}{2}(x+\HAT{x}),
\]
that is, $\QU(a+be_4)=a$. Since $\QU^2=\QU$ we make the following conclusion.

\begin{thm}
The quaternions $\mathbb{H}$ is a subalgebra of the octonions $\mathbb{O}$ generated by the  elements $\{1,e_1,e_2,e_3\}$. The mapping
\[
\QU\colon\mathbb{O}\to \mathbb{H}
\]
is a projection. 
\end{thm}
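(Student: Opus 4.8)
The plan is to follow the template already used for the two preceding theorems on $\RE$ and $\CO$, splitting the assertion into two independent parts: that $\mathbb{H}=\operatorname{span}\{1,e_1,e_2,e_3\}$ is closed under the octonionic product, and that $\QU$ is idempotent. Each part should reduce to a one-line consequence of the machinery already in place.

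First I would establish the subalgebra property. Identifying $\mathbb{H}$ with the octonions of quaternionic form $a+0\cdot e_4$ (i.e.\ those with vanishing $e_4$-component $b=0$), closure under addition and real scaling is immediate, so only closure under multiplication needs checking. Here I would invoke Lemma \ref{tulo}: taking $x=a_1+0\cdot e_4$ and $y=a_2+0\cdot e_4$, the product formula gives $xy=a_1a_2+0\cdot e_4$, which again has vanishing $e_4$-component and hence lies in $\mathbb{H}$. Equivalently, one reads this off the upper-left $4\times4$ block of the multiplication table, whose entries all lie in $\{\pm1,\pm e_1,\pm e_2,\pm e_3\}$. This reproves the observation already recorded after the table, that $\{1,e_1,e_2,e_3\}$ generates a quaternionic subalgebra.

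Next I would verify that $\QU$ is a projection, i.e.\ $\QU^2=\QU$. From the definition of the hat involution, $\HAT{x}=a-be_4$ for $x=a+be_4$, so $\QU(x)=\tfrac12(x+\HAT{x})=\tfrac12\bigl((a+be_4)+(a-be_4)\bigr)=a$, which both confirms the stated formula $\QU(a+be_4)=a$ and shows $\QU$ maps $\mathbb{O}$ into $\mathbb{H}$. Applying $\QU$ once more to $a=a+0\cdot e_4$ returns $a$, since its $e_4$-component already vanishes; thus $\QU^2(x)=\QU(a)=a=\QU(x)$. Equivalently, $\QU$ restricts to the identity on $\mathbb{H}$ and has image $\mathbb{H}$, which is exactly the statement that it is the projection of $\mathbb{O}$ onto $\mathbb{H}$ along the $e_4$-complement. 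For completeness I might also note that the hat map is a genuine involution, $\widehat{\HAT{x}}=x$, paralleling the role of $\BAR{x}$ and $x^*$ in the $\RE$ and $\CO$ cases.

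There is no real obstacle in this argument: every step is a direct consequence of Lemma \ref{tulo} and the explicit form of $\HAT{x}$. If anything, the only point deserving care is making the identification $\mathbb{H}\cong\{a+0\cdot e_4\}$ precise and checking that the $e_4$-component of the product genuinely vanishes when both factors lie in $\mathbb{H}$ — but this is precisely what the quaternionic product formula delivers.
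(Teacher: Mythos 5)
Your argument is correct and matches the paper's (largely implicit) reasoning: the paper notes closure of $\{1,e_1,e_2,e_3\}$ directly from the multiplication table and concludes the projection property from $\QU(a+be_4)=a$ and $\QU^2=\QU$, exactly as you do. Your use of Lemma \ref{tulo} to verify closure is just a cleaner packaging of the same table-based observation, so there is no substantive difference.
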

Next we study algebraic properties of the preceding involutions.

\begin{prop} If $x,y\in\mathbb{O}$, then
\begin{enumerate}[(a)]
\item $x^{**}=x$,\label{item:algpropa}
\item $\HAT{\HAT{x}}=x$,\label{item:algpropc}
\item $\HAT{xy}=\HAT{x}\,\HAT{y}$.\label{item:algpropd}
\end{enumerate}
\end{prop}
\begin{proof}
Parts \eqref{item:algpropa} and \eqref{item:algpropc} are obvious. We prove \eqref{item:algpropd}. Let $x=a_1+b_1e_4$ and $y=a_2+b_2e_4$. Then using the rules of $e_4$-calculus and Lemma \ref{tulo}, we have
\begin{align*}
\HAT{x}\,\HAT{y}&=(a_1-b_1e_4)(a_2-b_2e_4)\\
&=a_1a_2-(b_1e_4)a_2-a_1(b_2e_4)+(b_1e_4)(b_2e_4)\\
&=a_1a_2-\BAR{b}_2b_1-(b_1\BAR{a}_2+b_2a_1)e_4=\HAT{xy}.\qedhere
\end{align*}
\end{proof}

\subsubsection{Linear Mappings and their Invariance Groups}
A real linear mapping $T\colon\mathbb{O}\to\mathbb{O}$ from the octonions into itself is  acting on
\[
x=\sum_{j=0}^7 x_je_j
\]
as
\[
Tx=\sum_{i,j=0}^7 T_{ij}x_je_i
\]
where $T_{ij}\in\mathbb{R}$. We define the matrix representation of $T$ by $[T]:=[T_{ij}]\in\mathbb{R}^{8\times 8}$. It is easy to see that
$[TS]=[T][S]$ and $[T^{-1}]=[T]^{-1}$. Using matrix representation we may define the determinant of $T$ as $\det(T):=\det([T])$. The set of invertible linear mappings on $\mathbb{O}$ is denoted by $\GL(\mathbb{O})$.

We will consider real bilinear functions
\[
B\colon\mathbb{O}\times\mathbb{O}\to\mathbb{O}.
\]
For each function, we may associate a symmetry group
\[
\mathcal{G}(B):=\{ T\in\GL(\mathbb{O})\colon B(Tx,Ty)=B(x,y)\ \forall x,y\in\mathbb{O}\}.
\]
Let us next study bilinear functions generated by preceding projection mappings and the product $x\BAR{y}$.

\subsubsection{$\SO(8)$}
We define a bilinear form $B_{\mathbb{R}}\colon\mathbb{O}\times\mathbb{O}\to\mathbb{R}$ by
\[
B_{\mathbb{R}}(x,y)=\RE(x\BAR{y}).
\]
One may compute
\[
B_{\mathbb{R}}(x,y)=\sum_{j=0}^7 x_jy_j,
\]
and it is well known (see e.g. \cite{H,P}), that this form is invariant under orthogonal transformations, i.e., 
\[
\mathcal{G}(B_{\mathbb{R}})=\SO(8).
\]

\subsubsection{$\U(4)$}
This part is a modification of a similar result in \cite{G}, but a different definition of the octonions was used and none of the proofs were given. We define a bilinear form $B_{\mathbb{C}}\colon\mathbb{O}\times\mathbb{O}\to\mathbb{C}$ by
\[
B_{\mathbb{C}}(x,y)=\CO(x\BAR{y}).
\]
If $z=u+ve_4\in\mathbb{C}$, we denote $\RE(z)=u$ and $\IM(z)=v$. Then $B_{\mathbb{C}}=\RE (B_{\mathbb{C}})+\IM (B_{\mathbb{C}}) e_4$ and
\[
\mathcal{G}(B_{\mathbb{C}})=\mathcal{G}(\RE (B_{\mathbb{C}}))\cap \mathcal{G}(\IM (B_{\mathbb{C}})).
\]

\begin{lem} If $B_{\mathbb{C}}$ is the preceding bilinear function, then
\begin{enumerate}[(a)]
\item $\mathcal{G}(\RE (B_{\mathbb{C}}))=\SO(8)$,
\item $\mathcal{G}(\IM (B_{\mathbb{C}}))=\Sp(8)$.
\end{enumerate}
\end{lem}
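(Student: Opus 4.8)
The plan is to reduce each symmetry group to an object already understood, by computing the two real forms $\RE(B_{\mathbb{C}})$ and $\IM(B_{\mathbb{C}})$ explicitly in quaternionic form. Part (a) is essentially immediate. The complex projection fixes the real part, in the sense that $\RE(\CO(z))=\RE(z)$ for every $z\in\mathbb{O}$, because $\CO(a+be_4)=\RE(a)+\RE(b)e_4$ and the summand $\RE(b)e_4$ carries no $e_0$-component. Hence
\[
\RE(B_{\mathbb{C}})(x,y)=\RE\big(\CO(x\BAR{y})\big)=\RE(x\BAR{y})=B_{\mathbb{R}}(x,y),
\]
so the two forms literally coincide and $\mathcal{G}(\RE(B_{\mathbb{C}}))=\mathcal{G}(B_{\mathbb{R}})=\SO(8)$ by the fact recorded for $B_{\mathbb{R}}$ above. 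No further work is needed for (a).

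For (b) I would first obtain a closed formula for $\IM(B_{\mathbb{C}})$. Writing $x=a_1+b_1e_4$ and $y=a_2+b_2e_4$, Lemma \ref{lem:octconjnorm} gives $\BAR{y}=\BAR{a}_2-b_2e_4$, and substituting into Lemma \ref{tulo} yields
\[
x\BAR{y}=(a_1\BAR{a}_2+\BAR{b}_2b_1)+(b_1a_2-b_2a_1)e_4.
\]
Applying $\CO$ and reading off the $e_4$-component gives
\[
\IM(B_{\mathbb{C}})(x,y)=\RE(b_1a_2-b_2a_1).
\]
I would then expand this with the quaternionic identity $\RE(pq)=p_0q_0-\m{p}\cdot\m{q}$, where $\m{p},\m{q}$ are the vector parts. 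In the coordinates of the excerpt this produces the explicit skew form
\[
\IM(B_{\mathbb{C}})(x,y)=(x_1y_5-x_5y_1)+(x_2y_6-x_6y_2)+(x_3y_7-x_7y_3)-(x_0y_4-x_4y_0).
\]

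It remains to recognise this form. Putting $y=x$ shows $\IM(B_{\mathbb{C}})(x,x)=0$, so the form is alternating; and its matrix is a direct sum of four invertible $2\times 2$ blocks pairing $x_j$ with $x_{j+4}$ for $j=0,1,2,3$, so it is non-degenerate. A real linear map preserves a non-degenerate alternating form on $\mathbb{R}^8$ exactly when it is symplectic, which is the defining property of $\Sp(8)$; since any two non-degenerate alternating forms on $\mathbb{R}^8$ are linearly equivalent, the particular form above is harmless and $\mathcal{G}(\IM(B_{\mathbb{C}}))=\Sp(8)$. The only genuine labour is the computation of $x\BAR{y}$ and the reading-off of its coordinates; the point that must be watched, rather than assumed, is the combination of skew-symmetry and non-degeneracy, since these are precisely the two features that pin the invariance group down to the symplectic group.
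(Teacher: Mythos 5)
Your proposal is correct and follows essentially the same route as the paper: part (a) via the identity $\RE(B_{\mathbb{C}})=B_{\mathbb{R}}$, and part (b) by computing $\IM(B_{\mathbb{C}})(x,y)=\RE(b_1a_2-b_2a_1)$ in coordinates and recognising it as a non-degenerate alternating form equivalent to the standard symplectic one. The only cosmetic difference is that the paper exhibits the explicit coordinate swap $x_0\leftrightarrow x_4$, $y_0\leftrightarrow y_4$ to reach the block form $\begin{bmatrix}&I_4\\-I_4&\end{bmatrix}$, whereas you invoke the general equivalence of non-degenerate alternating forms.
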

\begin{proof}
(a) This follows from the fact $\RE(B_\mathbb{C})=B_\mathbb{R}$.\\
(b) Writing
\begin{align*}
\RE(b_1a_2)=x_4y_0-x_5y_1-x_6y_2-x_7y_3,\\
\RE(b_2a_1)=x_0y_4-x_1y_5-x_2y_6-x_3y_7,
\end{align*}
we have
\begin{align*}
&\IM(B_{\mathbb{C}}(x,y))=\RE(b_1a_2-b_2a_1)\\
&=x_4y_0-x_5y_1-x_6y_2-x_7y_3-x_0y_4+x_1y_5+x_2y_6+x_3y_7\\
&=[x]^T\begin{bmatrix}
&&&&-1\\
&&&&&1\\
&&&&&&1\\
&&&&&&&1\\
1\\
&-1\\
&&-1\\
&&&-1
\end{bmatrix}[y],
\end{align*}
where $[x]^T=\begin{bmatrix}x_0 & x_1 & \cdots & x_7\end{bmatrix}$ and $[y]$ similarly. Changing the coordinates $x_0$ and $x_4$, and $y_0$ and $y_4$, we see that 
\[
\IM(B_{\mathbb{C}}(x,y))=[x]^T\begin{bmatrix}
& I_4\\
-I_4
\end{bmatrix}[y].
\]
It is well known that symplectic transformations leaves this form invariant.
\end{proof}

Using classical 2-out-of-3 property\footnote{This means, that in general the unitary group is the intersection $\U(n)=\OO(2n)\cap\Sp(2n,\mathbb{R})\cap\GL(n,\mathbb{C})$.} we obtain, that
\[
\mathcal{G}(B_{\mathbb{R}})=\U(4).
\]

\subsubsection{$S^4_R\times S^4_L$}
Let us now consider bilinear function
$B_{\mathbb{H}}\colon\mathbb{O}\times\mathbb{O}\to\mathbb{H}$ by
\[
B_{\mathbb{H}}(x,y)=\QU(x\BAR{y}).
\]
If $x=a_1+b_1e_4$ and $y=a_2+b_2e_4$, using Lemmas \ref{tulo} and \ref{lem:octconjnorm} we obtain
\begin{equation*}
B_{\mathbb{H}}(x,y)=a_1\BAR{a}_2+\overline{b}_2b_1.
\end{equation*}
Let $S^3=\{x\in\mathbb{H}\colon x\BAR{x}=\BAR{x}x=1\}$ be the unit $3$-sphere. We obtain the observation (see \cite{G}), that
\[
a_1\BAR{a}_2+\overline{b}_2b_1=a_1q\BAR{a_2q}+\overline{pb_2}pb_1
\]
for all $q,p\in S^3$. Let us recall the following result.

\begin{prop}\cite[Proposition 8.26]{P}
Let $q\in S^3$. Define the mappings $L_q$ and $R_q$ by
\[
L_q(a)=qa\ \text{ and }\ R_q(a)=aq,
\]
where $a\in\mathbb{H}$. Then $L_q,R_q\in\SO(4)$.
\end{prop}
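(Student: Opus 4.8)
The plan is to verify the two defining conditions of $\SO(4)$ separately, namely orthogonality and orientation-preservation ($\det = +1$), handling $L_q$ and $R_q$ in parallel since the arguments are mirror images of each other.

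First I would note that both $L_q$ and $R_q$ are $\mathbb{R}$-linear maps of $\mathbb{H}\cong\mathbb{R}^4$ into itself; linearity is immediate from the distributivity of quaternion multiplication. To see that they are orthogonal, I would invoke the composition law on the composition algebra $\mathbb{H}$, which gives $|L_q(a)|=|qa|=|q|\,|a|=|a|$ and likewise $|R_q(a)|=|aq|=|a|$, using $|q|=1$ for $q\in S^3$. A linear map preserving the norm also preserves the associated inner product $\langle a,b\rangle=\RE(a\BAR{b})$ by polarization (this is exactly the form $B_{\mathbb{R}}$ on $\mathbb{H}$), so both $L_q$ and $R_q$ lie in $\OO(4)$.

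It then remains only to rule out determinant $-1$, and for this I would use a connectedness argument. The assignment $q\mapsto L_q$ is continuous from $S^3$ into $\OO(4)$, and since $\mathbb{H}$ is associative it is in fact a homomorphism, $L_{q_1q_2}=L_{q_1}L_{q_2}$. Hence $q\mapsto\det(L_q)$ is a continuous map from the connected sphere $S^3$ into the discrete set $\{-1,+1\}$, so it is constant; evaluating at $q=1$, where $L_1=\mathrm{id}$ has determinant $+1$, forces $\det(L_q)=+1$ for every $q\in S^3$. The identical argument applies to $R_q$, and therefore $L_q,R_q\in\SO(4)$.

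I do not expect a genuine obstacle here: orthogonality is forced by the multiplicative norm, and the only point requiring care is the sign of the determinant. If one prefers a purely algebraic route to that sign in place of connectedness, I would observe that squaring is surjective on $S^3$, since any unit quaternion $\cos\theta+\sin\theta\,\m{n}$ (with $\m{n}$ a unit vector part) equals the square of $\cos(\theta/2)+\sin(\theta/2)\,\m{n}$; writing $q=p^2$ then gives $\det(L_q)=\det(L_p)^2\ge 0$, which together with $|\det(L_q)|=1$ pins the value to $+1$.
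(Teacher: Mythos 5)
Your argument is correct and complete. Note that the paper itself offers no proof of this proposition at all --- it simply cites Porteous --- so there is no internal argument to compare against; what you have written is a self-contained justification of the cited fact. Your route is the standard one: linearity from distributivity, membership in $\OO(4)$ from the multiplicative norm of the composition algebra $\mathbb{H}$ together with polarization (which is exactly the bilinear form $B_{\mathbb{R}}$ restricted to $\mathbb{H}$), and then the determinant pinned down either by connectedness of $S^3$ or by surjectivity of squaring on $S^3$. Both of your determinant arguments are sound; the connectedness one does not even need the homomorphism property of $q\mapsto L_q$ (only continuity), which is convenient since $q\mapsto R_q$ is an anti-homomorphism, $R_{q_1q_2}=R_{q_2}\circ R_{q_1}$ --- though this still gives $R_{p^2}=R_p\circ R_p$, so your algebraic alternative works for $R_q$ as well. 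The only point worth making explicit if this were written out in full is that the composition law $|ab|=|a|\,|b|$ on $\mathbb{H}$ is inherited from the corresponding statement for $\mathbb{O}$ recorded earlier in the paper, since $\mathbb{H}$ is a subalgebra.
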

Define the groups $S^3_L:=\{ L_q\colon q\in S^3\}$ and $S^3_R:=\{ R_q\colon q\in S^3\}$. We may define the action on octonions as
\[
\rho_{q,p}(a+be_4):=R_qa+(L_pb)e_4.
\]
Then we see, that $B_{\mathbb{H}}(\rho_{q,p}x,\rho_{q,p}y)=B_{\mathbb{H}}(x,y)$, and 
\[
S^3_R\times S^3_L\subset\mathcal{G}(B_{\mathbb{H}}).
\]

\section{Cauchy-Riemann Operators}
In this section we begin to study the basic analytical properties of the octonion valued functions. First we recall some basic properties and after that we express some equivalent systems related to the decomposition of octonions. Using these equivalent systems we may avoid non-associativity.

\subsection{Definitions and Basic Properties}
In the octonionic analysis we consider functions defined on a set $\Omega\subset\mathbb{R}^8\cong\mathbb{O}$ and taking values in $\mathbb{O}$. Similarly than in the case of quaternionic analysis, we may consider octonionic analyticity, and see that the generalization of Cauchy-Riemann equations is the only way to get a nice function class (see \cite{LKP}). We begin by connecting to an octonion
\[
x=x_0+x_1e_1+\cdots+x_7e_7
\]
the derivative operator
\[
\partial_x=\partial_{x_0}+e_1\partial_{x_1}+\cdots+e_7\partial_{x_7}.
\]
The preceding derivative operator is called the \emph{Cauchy-Riemann operator}. The vector part of it
\[
\partial_{\m{x}}= e_1\partial_{x_1}+\cdots+e_7\partial_{x_7}
\]
is called the \emph{Dirac operator}. Now it is easy to represent the Cauchy-Riemann operator and its conjugate as
\[
\partial_x=\partial_{x_0}+\partial_{\m{x}}\ \text{ and }\ \partial_{\BAR{x}}=\partial_{x_0}-\partial_{\m{x}}.
\] 
The function $f\colon\mathbb{O}\to\mathbb{O}$ is of the form
\[
f=\sum_{j=0}^7 e_jf_j
\]
where $f\colon\mathbb{O}\to\mathbb{R}$. If the components of $f$ have partial derivatives, then $\partial_x$ operates from the left as
\[
\partial_xf
=\sum_{i=0}^7e_i\partial_{x_i}f
=\sum_{i,j=0}^7e_ie_j\partial_{x_i}f_j
\]
and from the right as
\[
f\partial_x
=\sum_{i=0}^7fe_i\partial_{x_i}
=\sum_{i,j=0}^7e_je_i\partial_{x_i}f_j.
\]

\begin{defn}\label{def:monogenity}
Let $\Omega\subset\mathbb{O}$ be open and $f\colon\Omega\to\mathbb{O}$ componentwise differentiable function. If
\[
\partial_xf=0\quad (\text{resp. }f\partial_x=0)
\]
in $\Omega$, then $f$ is called \emph{left (resp. right) monogenic in $\Omega$}.
\end{defn}
We define the \emph{Laplace operator} as
\[
\Delta_x=\partial_{x_0}^2+\partial_{x_1}^2+\cdots+\partial_{x_7}^2.
\]
Because $\BAR{x}x=x\BAR{x}$, it follows that in $C^2(\mathbb{O},\mathbb{O})$
\begin{equation}
\partial_{\BAR{x}}\partial_x=\partial_x\partial_{\BAR{x}}=\Delta_x.
\end{equation}
From the alternativity (Proposition \ref{ALT}) it follows $(\BAR{x}x)y=\BAR{x}(xy)$, and therefore for $f\in C^2(\mathbb{O},\mathbb{O})$
\begin{equation}
(\partial_{\BAR{x}}\partial_x)f
=\partial_{\BAR{x}}(\partial_xf)
=\partial_x(\partial_{\BAR{x}}f).
\end{equation}
Similarly
\begin{equation}
f(\partial_{\BAR{x}}\partial_x)
=(f\partial_{\BAR{x}})\partial_x
=(f\partial_x)\partial_{\BAR{x}}.
\end{equation}

These properties give us, like in the quaternionic analysis case:

\begin{prop}\label{prop:mongisharm}
Let a function $f\in C^2(\mathbb{O},\mathbb{O})$ be left or right monogenic. Then $f$ is harmonic.
\end{prop}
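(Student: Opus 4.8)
The plan is to show that a left monogenic function is harmonic by factoring the Laplace operator through the Cauchy-Riemann operator and its conjugate. The key identity is the factorization $\partial_{\BAR{x}}\partial_x=\Delta_x$ established in the excerpt, together with the associativity-type relation $(\partial_{\BAR{x}}\partial_x)f=\partial_{\BAR{x}}(\partial_x f)$, which holds for $f\in C^2(\mathbb{O},\mathbb{O})$ precisely because of the alternativity of $\mathbb{O}$ (Proposition \ref{ALT}). These two facts do all the real work, so the proof is essentially a two-line application of them.

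First I would treat the left monogenic case. Suppose $\partial_x f=0$ in $\Omega$. Applying $\partial_{\BAR{x}}$ on the left and using the displayed relation $(\partial_{\BAR{x}}\partial_x)f=\partial_{\BAR{x}}(\partial_x f)$, I get
\[
\Delta_x f=(\partial_{\BAR{x}}\partial_x)f=\partial_{\BAR{x}}(\partial_x f)=\partial_{\BAR{x}}0=0,
\]
so $f$ is harmonic. The subtle point I must be careful about is that $\Delta_x f=0$ as an $\mathbb{O}$-valued equation is equivalent to each real component $f_j$ being harmonic, since the Laplacian $\Delta_x$ acts diagonally (scalar-wise) on the eight real components; this is what justifies calling $f$ harmonic in the usual sense.

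Next I would handle the right monogenic case symmetrically. If $f\partial_x=0$, then applying $\partial_{\BAR{x}}$ from the right and invoking the companion relation $f(\partial_{\BAR{x}}\partial_x)=(f\partial_x)\partial_{\BAR{x}}$ gives
\[
\Delta_x f=f(\partial_{\BAR{x}}\partial_x)=(f\partial_x)\partial_{\BAR{x}}=0,
\]
again yielding harmonicity. Here I would note that $\Delta_x$, being a scalar real differential operator, commutes with left and right multiplication, so $f(\partial_{\BAR{x}}\partial_x)$ and $(\partial_{\BAR{x}}\partial_x)f$ both equal $\Delta_x f$, and no ambiguity arises.

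The main obstacle is not computational but conceptual: in a non-associative algebra one cannot freely regroup operator applications, so the only reason the argument works is that the specific regroupings needed are exactly the ones guaranteed by alternativity, already recorded in the displayed equations (2.2) and (2.3) of the excerpt. I would therefore emphasize that the proof rests entirely on those pre-established identities, and that attempting the naive associative manipulation would be unjustified in $\mathbb{O}$. Once this is acknowledged, the result follows immediately and there is nothing further to check.
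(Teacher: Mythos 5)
Your proof is correct and follows exactly the route the paper intends: it applies the pre-established identities $(\partial_{\BAR{x}}\partial_x)f=\partial_{\BAR{x}}(\partial_xf)$ and $f(\partial_{\BAR{x}}\partial_x)=(f\partial_x)\partial_{\BAR{x}}$ together with the factorization $\partial_{\BAR{x}}\partial_x=\Delta_x$, which is precisely what the paper means by ``these properties give us'' the result. Your added remarks on why alternativity is needed and why $\Delta_x f=0$ means componentwise harmonicity are accurate and only make the argument more explicit.
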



Some basic function theoretical result have already been studied in octonionic analysis, e.g., the following classical integral formulas holds.

\begin{thm}[\cite{LP}]
Let $M$ be an 8-dimensional, compact, oriented smooth manifold with boundary $\partial M$ contained in some open connected subset $\Omega\subset\mathbb{R}^8$, and the function $f\colon\Omega\to \mathbb{O}$ left monogenic. Then for each $x\in M$ we have
\[
f(x)=\frac{1}{\omega_8}\int_{\partial M}\frac{\BAR{x-y}}{|x-y|^8} (n(y)f(y))\,dS(y),
\]
where $\omega_8$ is the volume of the sphere $S^7$, $n$ outward pointing unit normal on $\partial M$ and $dS$ the scalar surface element on the boundary.
\end{thm}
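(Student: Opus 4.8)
The plan is to run the classical Cauchy--Pompeiu excision argument, but to tame the non-associativity by descending to real components at the one step (the divergence theorem) where associativity would otherwise be invoked. First I would fix the Cauchy kernel
\[
E(z)=\frac{1}{\omega_8}\frac{\BAR{z}}{|z|^8},\qquad z\in\mathbb{O}\setminus\{0\},
\]
and record its structure componentwise: writing $E=\sum_{a=0}^7 E_ae_a$ one has $E_a=\eta_a\,\tfrac{1}{\omega_8}z_a|z|^{-8}$ with $\eta_0=1$ and $\eta_a=-1$ for $a\ge1$. Since $z_a|z|^{-8}=-\tfrac16\partial_{z_a}|z|^{-6}$ and $|z|^{-6}$ is (up to a constant) the fundamental solution of $\Delta$ in $\mathbb{R}^8$, two facts follow at once: the Jacobian $\partial_{z_k}E_a=\eta_a M_{ak}$ has $M_{ak}=\tfrac1{\omega_8}\partial_{z_k}(z_a|z|^{-8})$ symmetric in $(a,k)$, and $\operatorname{div}\!\big(z|z|^{-8}\big)=0$. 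From the symmetry and the vanishing trace I would deduce that $E$ is both left and right monogenic on $\mathbb{O}\setminus\{0\}$, hence the translate $K_x(y):=E(x-y)$ satisfies $\partial_yK_x=K_x\partial_y=0$ for $y\ne x$.

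Next I would derive the Stokes identity that drives the formula. With $x$ in the interior of $M$, write $K_x=\sum_a g_ae_a$, $n=\sum_k n_ke_k$, $f=\sum_b f_be_b$, so that
\[
K_x(nf)=\sum_{a,k,b}g_af_b\,n_k\,e_a(e_ke_b).
\]
Applying the ordinary (real-valued, associativity-free) divergence theorem to each scalar integrand $g_af_b$, $\int_{\partial\Omega}g_af_b\,n_k\,dS=\int_\Omega\partial_{y_k}(g_af_b)\,dV$, and then reassembling the octonionic structure constants, the contribution carrying $\partial_{y_k}f_b$ collapses exactly to $K_x(\partial_yf)$ because the real coefficients factor through the products. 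The contribution carrying $\partial_{y_k}g_a$ equals $(K_x\partial_y)f$ after regrouping $e_a(e_ke_b)\rightsquigarrow(e_ae_k)e_b$, at the cost of the associator defect
\[
D:=-\sum_{a,k,b}(\partial_{y_k}g_a)\,f_b\,[e_a,e_k,e_b],\qquad [u,v,w]:=(uv)w-u(vw).
\]

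The crux, and the reason the bracketing $K_x(nf)$ is the right one, is that $D$ vanishes identically. Since $\mathbb{O}$ is alternative (Proposition \ref{ALT}), the associator $[u,v,w]$ is alternating, so it picks out the antisymmetric part of the coefficient $\partial_{y_k}g_a=-\eta_aM_{ak}$ in the indices $(a,k)$; because $M_{ak}$ is symmetric, this surviving part is proportional to $\eta_a-\eta_k$, which is nonzero only when exactly one of $a,k$ equals $0$. But then $[e_a,e_k,e_b]$ contains the factor $e_0=1$ and is zero. Hence $D\equiv0$, and with $\partial_yf=0$ (left monogenicity of $f$) and $K_x\partial_y=0$ (right monogenicity of the kernel) the entire volume integrand is identically zero. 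Applying the identity on $M_\varepsilon:=M\setminus\overline{B(x,\varepsilon)}$ therefore leaves only boundary terms, relating the integral over $\partial M$ to the integral over the inner sphere $S_\varepsilon=\partial B(x,\varepsilon)$, whose induced normal is $n(y)=(x-y)/\varepsilon$.

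Finally I would evaluate the inner-sphere term. Writing $w=x-y$, on $S_\varepsilon$ the integrand is
\[
K_x(nf)=\frac{1}{\omega_8\varepsilon^{9}}\,\BAR{w}\big(wf(y)\big)=\frac{1}{\omega_8\varepsilon^{9}}\,|w|^2f(y)=\frac{f(y)}{\omega_8\varepsilon^{7}},
\]
where the regrouping $\BAR{w}(wf)=(\BAR{w}w)f=|w|^2f$ is legitimate because $w$ and $\BAR{w}$ generate an associative (indeed commutative) subalgebra of the alternative algebra $\mathbb{O}$. As the area of $S_\varepsilon$ equals $\omega_8\varepsilon^7$ and $f$ is continuous, the inner-sphere integral tends to $f(x)$ as $\varepsilon\to0$, which pins down both the normalizing constant $1/\omega_8$ and the stated kernel; the orientation of $\partial M$ and of $S_\varepsilon$ (outward versus inward normal) fixes the overall sign in agreement with the statement. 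I expect the main obstacle to be exactly the associator bookkeeping of the second and third paragraphs: one must verify that the defect $D$ vanishes pointwise rather than merely in integral, and that the left/right monogenicity hypotheses are used on the correct side so that no non-associative remainder survives.
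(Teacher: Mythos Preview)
The paper does not prove this theorem at all: it is quoted from \cite{LP} and immediately used as input for further remarks, so there is no in-paper argument to compare against. Your proposal is, in outline and in detail, the standard Cauchy--Pompeiu excision proof adapted to the octonions, and it is correct.

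Your treatment of the one genuinely octonionic difficulty is clean. Passing to real components to apply the scalar divergence theorem and then reassembling produces exactly the defect
\[
D=-\sum_{a,k,b}(\partial_{y_k}g_a)\,f_b\,[e_a,e_k,e_b],
\]
and your mechanism for killing it is right: alternativity (Proposition \ref{ALT}) makes $[e_a,e_k,e_b]$ alternating in $(a,k)$; the kernel's Jacobian factors as $\partial_{y_k}g_a=-\eta_a M_{ak}$ with $M_{ak}=M_{ka}$, so the surviving antisymmetric coefficient is $\tfrac12(\eta_k-\eta_a)M_{ak}$, which is nonzero only when exactly one of $a,k$ equals $0$; and then $[e_a,e_k,e_b]$ contains $e_0=1$ and vanishes. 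This is precisely why the bracketing $E(x-y)\,(nf)$ in the statement is the correct one. The inner-sphere reduction $\BAR w(wf)=(\BAR w w)f=|w|^2f$ is also justified, since $[\BAR w,w,f]=[2w_0-w,w,f]=-[w,w,f]=0$.

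One small caution: with the outward normal on $\partial M$ and the induced (inward-to-the-ball) normal $n=(x-y)/\varepsilon$ on $S_\varepsilon$, the excision identity yields $\int_{\partial M}K_x(nf)\,dS=-\lim_{\varepsilon\to0}\int_{S_\varepsilon}K_x(nf)\,dS=-f(x)$, i.e.\ the displayed formula with kernel $\overline{x-y}/|x-y|^8$ is off by a global sign from the usual Clifford-analytic convention $\overline{y-x}/|y-x|^8$. Your argument is unaffected, but you should track this sign explicitly rather than say it ``fixes itself.''
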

Using this theorem, similarly than in the quaternionic analysis case, we may prove many function theoretic results, for example the mean value theorem, maximum modulus theorem and Weierstrass type approximation theorems, see \cite{LP}.

\subsection{Equivalent Systems for Monogenic Functions}
In this paper our aim is to understand little bit better what are the monogenic functions in the octonionic analysis. In this section we consider two equivalent formulation for the equation $\partial_xf=0$ and make some observations. These elementary observations motivate us to consider more interesting case in the next section.

\subsubsection{A Real Decomposition}
We start from the most trivial case. We observe that the octonion algebra may be represented as a direct sum of $1$-dimensional real subspaces
\[
\mathbb{O}=\bigoplus_{j=0}^7 e_j\mathbb{R}.
\]
Now we can separate the variables and split also the target space and the Cauchy-Riemann operator due to this decomposition and write the variables, the functions, and the Cauchy-Riemann operator in the form
\begin{align*}
x&=\sum_{j=0}^7x_je_j,\\
f&=\sum_{j=0}^7f_je_j,\\
\partial_x&=\sum_{j=0}^7 e_j\partial_{x_j}.
\end{align*}
A tedious but straightforward computation yields that $f$ is left monogenic if and only if its component functions $f_0,f_1,...,f_7$ satisfies the $8\times 8$ real partial differential equation system
\newcommand{\parder}[2]{\partial_{x_#1}f_#2}
\[
\begin{cases}
\partial_{x_0}f_0-\partial_{x_1}f_1-\ldots-\partial_{x_7}f_7=0\\
\parder{0}{1}+\parder{1}{0}
+\parder{2}{3}-\parder{3}{2}
+\parder{4}{5}-\parder{5}{4}
-\parder{6}{7}+\parder{7}{6}=0\\
\parder{0}{2}+\parder{2}{0}
-\parder{1}{3}+\parder{3}{1}
+\parder{4}{6}-\parder{6}{4}
+\parder{5}{7}+\parder{7}{5}=0\\
\parder{0}{3}+\parder{3}{0}
+\parder{1}{2}-\parder{2}{1}
+\parder{4}{7}-\parder{7}{4}
-\parder{5}{6}+\parder{6}{5}=0\\
\parder{0}{4}+\parder{4}{0}
-\parder{1}{5}+\parder{5}{1}
-\parder{2}{6}+\parder{6}{2}
-\parder{3}{7}+\parder{7}{3}=0\\
\parder{0}{5}+\parder{5}{0}
+\parder{1}{4}-\parder{4}{1}
-\parder{2}{7}+\parder{7}{2}
+\parder{3}{6}-\parder{6}{3}=0\\
\parder{0}{6}+\parder{6}{0}
+\parder{1}{7}-\parder{7}{1}
+\parder{2}{4}-\parder{4}{2}
-\parder{3}{5}+\parder{5}{3}=0\\
\parder{0}{7}+\parder{7}{0}
-\parder{1}{6}+\parder{6}{1}
+\parder{2}{5}-\parder{5}{2}
+\parder{3}{4}-\parder{4}{3}=0
\end{cases}
\]
\begin{rem}
A reader should notice that this system is different from the Riesz system of Stein and Weiss, 
\[
\begin{cases}
\partial_{x_0}f_0-\partial_{x_1}f_1-\ldots-\partial_{x_7}f_7=0\\
\partial_{x_0}f_i+\partial_{x_i}f_0=0\qquad(i=1,\ldots,7)\\
\partial_{x_i}f_j-\partial_{x_j}f_i=0\qquad(i,j=1,\ldots,7,\ i\ne j)
\end{cases}
\]
\end{rem}

\subsubsection{A Complex Decomposition}
The preceding case motivates us to proceed further using similar techniques. Now we observe that the octonions may be express as a direct sum of complex numbers
\[
\mathbb{O}=\mathbb{C}\oplus \mathbb{C}e_2\oplus(\mathbb{C}\oplus \mathbb{C}e_2)e_4,
\]
where a basis of $\mathbb{C}$ is $\{1,e_1\}$. We may write an octonion with respect to this decomposition as
\begin{equation*}
x=z_1+z_2e_2+(z_3+z_4e_2)e_4.
\end{equation*}
where we denote
\begin{equation*}
\begin{aligned}
z_1&=x_0+x_1e_1,\\
z_2&=x_2+x_3e_1,\\
z_3&=x_4+x_5e_1,\\
z_4&=x_6+x_7e_1.
\end{aligned}
\end{equation*}
Similarly we express a function $f$ as a sum of complex valued functions $f_j=f_j(z_1,z_2,z_3,z_4)$ in the form
\begin{equation}
f=f_1+f_2e_2+(f_3+f_4e_2)e_4.
\end{equation}
If we define complex Cauchy-Riemann operators as 
\begin{align*}
\crzi{1}&=\crxi{0}+e_1\crxi{1},\\
\crzi{2}&=\crxi{2}+e_1\crxi{3},\\
\crzi{3}&=\crxi{4}+e_1\crxi{5},\\
\crzi{4}&=\crxi{6}+e_1\crxi{7},
\end{align*}
we may split the Cauchy-Riemann operator as
\[
\partial_x=\crzi{1}+\crzi{2} e_2+(\crzi{3}+\crzi{4} e_2)e_4.
\]
Again, after tedious computations, one have that $\partial_xf=0$ is equivalent to the complex $4\times 4$ equation system
\[
\begin{cases}
\crzi{1}f_1-\crzi{2}\BAR{f}_2-\crzi{3}\BAR{f}_3-\partial_{\BAR{z}_4}f_4=0\\
\crzi{1}f_2+\crzi{2}\BAR{f}_1+\partial_{\BAR{z}_3}f_4-\crzi{4}\BAR{f}_3=0\\
\crzi{1}f_3-\partial_{\BAR{z}_2}f_4+\crzi{3}\BAR{f}_1+\crzi{4}\BAR{f}_2=0\\
\partial_{\BAR{z}_1}f_4+\crzi{2}f_3-\crzi{3}f_2+\crzi{4}f_1=0
\end{cases}
\]

\subsection{Quaternionic Cauchy-Riemann Equations}
The preceding observations motivates us to consider the following case. We express the octonion algebra as a direct sum of quaternions
\[
\mathbb{O}=\mathbb{H}\oplus \mathbb{H}e_4.
\]
This decomposition correspond quaternionic forms of octonions and we know that every function takes the form $f=g+he_4$. If we split also a variable $x=u+ve_4$, where
\[
u=x_0+x_1e_1+x_2e_2+x_3e_3\ \text{ and }\  v=x_4+x_5e_1+x_6e_2+x_7e_3,
\]
we observe that $f,g\colon\mathbb{H}\times\mathbb{H}\to\mathbb{H}$ are  functions of two quaternionic variables. 
Similarly we split
\[
\partial_x=\partial_u+\partial_ve_4,
\]
where $\partial_u$ and $\partial_v$ are quaternionic Cauchy-Riemann operators. The rules of $e_4$-calculus in Lemma \ref{lem:paravecprod} give us immediately:

\begin{lem}\label{lem:diffrules}
Let $f\colon\Omega\subset\mathbb{H}\to\mathbb{H}$ be a differentiable function and $\partial_u=\partial_{u_0}+e_1\partial_{u_1}+e_2\partial_{u_2}+e_3\partial_{u_3}$ the quaternionic Cauchy-Riemann operator. Then we have
\begin{enumerate}[(a)]
\item $\partial_u(fe_4)=(f\partial_u)e_4$,
\item $(\partial_u e_4)f=(\partial_u\overline{f})e_4$,
\item $(\partial_u e_4)(fe_4)=-\overline{f}\partial_u$.
\item $(fe_4)\partial_u=(f\partial_{\BAR{u}})e_4$,
\item $f(\partial_u e_4)=(\partial_uf)e_4$,
\item $(fe_4)(\partial_u e_4)=-\partial_{\BAR{u}}f$.
\end{enumerate}
\end{lem}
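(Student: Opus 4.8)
The plan is to reduce every one of the six identities to the corresponding scalar rule of the $e_4$-calculus (Lemma~\ref{lem:paravecprod}), applied term by term. Writing $\partial_u=\sum_{\ell=0}^3 e_\ell\partial_{u_\ell}$ and recalling the left/right action conventions $\partial_u g=\sum_\ell e_\ell(\partial_{u_\ell}g)$ and $g\partial_u=\sum_\ell(\partial_{u_\ell}g)e_\ell$, each expression in the statement becomes a finite sum of products of the shape $a(be_4)$, $(ae_4)b$, or $(ae_4)(be_4)$, where $a,b\in\mathbb{H}$ are either a basis vector $e_\ell$ or a derivative $\partial_{u_\ell}f$. Since $\partial_{u_\ell}$ is a real scalar operator, it commutes with octonion multiplication and with conjugation, so the algebraic rules of Lemma~\ref{lem:paravecprod} apply verbatim inside each summand.

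Two auxiliary observations make the reassembly immediate. First, since $\partial_{u_\ell}$ is real, $\overline{\partial_{u_\ell}f}=\partial_{u_\ell}\overline{f}$; this is what produces the conjugated factor $\overline{f}$ in parts (b) and (c), where the conjugation in the rule falls on the derivative factor. Second, the conjugate operator satisfies $\partial_{\overline{u}}=\sum_\ell\overline{e_\ell}\partial_{u_\ell}$, so that in parts (d) and (f), where the conjugation instead falls on the basis vector $e_\ell$, the recollected sum is $\partial_{\overline{u}}$ rather than $\partial_u$. With these in hand, the six identities match the six rules of Lemma~\ref{lem:paravecprod} as follows: (a) and (e) use $a(be_4)=(ba)e_4$, (b) and (d) use $(ae_4)b=(a\overline{b})e_4$, while (c) and (f) use $(ae_4)(be_4)=-\overline{b}a$.

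To illustrate, for (a) I would compute $\partial_u(fe_4)=\sum_\ell e_\ell\big((\partial_{u_\ell}f)e_4\big)=\sum_\ell\big((\partial_{u_\ell}f)e_\ell\big)e_4=(f\partial_u)e_4$, the middle step being $a(be_4)=(ba)e_4$ with $a=e_\ell$, $b=\partial_{u_\ell}f$. For the right-hand identity (d) I would write $(fe_4)\partial_u=\sum_\ell\big((\partial_{u_\ell}f)e_4\big)e_\ell=\sum_\ell\big((\partial_{u_\ell}f)\overline{e_\ell}\big)e_4=(f\partial_{\overline{u}})e_4$, using $(ae_4)b=(a\overline{b})e_4$ and then recognizing $\sum_\ell(\partial_{u_\ell}f)\overline{e_\ell}=f\partial_{\overline{u}}$. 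The remaining four identities are dispatched identically by the rule indicated above.

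The only genuine obstacle is bookkeeping rather than mathematics: one must fix, and then scrupulously respect, the meaning of the composite symbol $\partial_u e_4$ and of left versus right action. In the left-acting identities (b) and (c) the symbol $\partial_u e_4$ must be read as the operator with octonionic coefficients $e_\ell e_4$ acting as $(\partial_u e_4)g=\sum_\ell(e_\ell e_4)(\partial_{u_\ell}g)$, whereas in the right-acting identities (e) and (f) the derivative differentiates the left factor while the coefficient $e_\ell e_4$ multiplies from the right. Because the algebra is non-associative, each summand must be bracketed exactly as $(ae_4)b$, $a(be_4)$, or $(ae_4)(be_4)$ so that a rule of Lemma~\ref{lem:paravecprod} is literally applicable; no rebracketing across a product is permitted. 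Once the conventions are pinned down, every step is a single unambiguous application of the $e_4$-calculus, which is why the statement can fairly be called \emph{immediate}.
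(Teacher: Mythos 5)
Your proof is correct and is exactly the argument the paper intends: the paper gives no written proof, stating only that the rules of Lemma~\ref{lem:paravecprod} yield the identities immediately, and your termwise expansion of $\partial_u=\sum_\ell e_\ell\partial_{u_\ell}$ with the correct pairing of each identity to rules (d), (e), (f) of that lemma (together with the observations that $\overline{\partial_{u_\ell}f}=\partial_{u_\ell}\overline{f}$ and $\partial_{\overline{u}}=\sum_\ell\overline{e_\ell}\partial_{u_\ell}$) is precisely the omitted verification. No gaps.
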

Using these rules, we obtain the following equivalent systems.

\begin{prop}[Quaternionic Cauchy-Riemann systems]
Assume that $f=g+he_4$ is as above. Then
\begin{itemize}
\item[(a)] $\partial_xf=0$ if and only if
\begin{equation}\label{eq:qcr1}
\begin{cases}
\partial_ug=\BAR{h}\partial_v,\\
h\partial_u=-\partial_v\BAR{g}.
\end{cases}
\end{equation}
\item[(b)] $f\partial_x=0$ if and only if
\begin{equation}\label{eq:qcr2}
\begin{cases}
g\partial_u=\partial_{\BAR{v}}h,\\
h\partial_{\BAR{u}}=-\partial_v g.
\end{cases}
\end{equation}
\end{itemize}
\end{prop}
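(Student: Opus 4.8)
The plan is to compute $\partial_x f$ directly in the quaternionic form using the $e_4$-calculus of Lemma~\ref{lem:paravecprod} (for multiplication by $e_4$) and, more importantly, the differentiation rules of Lemma~\ref{lem:diffrules}, which package exactly the $e_4$-interactions between the split operators $\partial_u, \partial_v$ and the split functions $g, h$. First I would write $\partial_x = \partial_u + \partial_v e_4$ acting from the left on $f = g + h e_4$ and expand by bilinearity into four terms:
\[
\partial_x f = \partial_u g + \partial_u(h e_4) + (\partial_v e_4) g + (\partial_v e_4)(h e_4).
\]
The middle two terms carry the $e_4$-twists. I expect to apply Lemma~\ref{lem:diffrules}(a) to rewrite $\partial_u(h e_4) = (h \partial_u) e_4$, part (b) to rewrite $(\partial_v e_4) g = (\partial_v \overline{g}) e_4$, and part (c) to rewrite $(\partial_v e_4)(h e_4) = -\overline{h}\, \partial_v$. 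This collapses everything into the canonical quaternionic form $P + Q e_4$ with $P = \partial_u g - \overline{h}\,\partial_v$ and $Q = h\partial_u + \partial_v \overline{g}$.

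Since $\{1, e_4\}$ are linearly independent over $\mathbb{H}$ (the decomposition $\mathbb{O} = \mathbb{H} \oplus \mathbb{H}e_4$ is a direct sum), the equation $\partial_x f = 0$ is equivalent to the simultaneous vanishing of the $\mathbb{H}$-part and the $\mathbb{H}e_4$-part, i.e. $P = 0$ and $Q = 0$. Rearranging $P = 0$ gives $\partial_u g = \overline{h}\,\partial_v$ and rearranging $Q = 0$ gives $h\partial_u = -\partial_v \overline{g}$, which is precisely system~\eqref{eq:qcr1}. This establishes part~(a).

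For part~(b) I would run the same computation with the operator acting from the right, writing $f \partial_x = (g + h e_4)(\partial_u + \partial_v e_4)$ and expanding into $g\partial_u + g(\partial_v e_4) + (h e_4)\partial_u + (h e_4)(\partial_v e_4)$. Here the relevant identities are Lemma~\ref{lem:diffrules}(e) for $g(\partial_v e_4) = (\partial_v g)e_4$, part~(d) for $(h e_4)\partial_u = (h\partial_{\overline{u}})e_4$, and part~(f) for $(h e_4)(\partial_v e_4) = -\partial_{\overline{v}} h$. Collecting the $\mathbb{H}$-part and the $\mathbb{H}e_4$-part and setting each to zero yields $g\partial_u = \partial_{\overline{v}} h$ and $h\partial_{\overline{u}} = -\partial_v g$, i.e.~system~\eqref{eq:qcr2}.

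The main obstacle is not any deep difficulty but rather the care required in bookkeeping the conjugations and the left/right placement of the operators: because $\mathbb{O}$ is non-associative, one must resist any temptation to regroup products, and one must apply each rule of Lemma~\ref{lem:diffrules} exactly as stated, since the rules already encode the correct parenthesization. In particular I would be careful that in the right-acting case the operator $\partial_u$ becomes $\partial_{\overline{u}}$ wherever an $e_4$-conjugation is triggered (rules (d) and (f)), which is what breaks the symmetry between systems~\eqref{eq:qcr1} and~\eqref{eq:qcr2}.
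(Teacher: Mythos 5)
Your proposal is correct and follows essentially the same route as the paper: expand $\partial_x f$ (resp.\ $f\partial_x$) by bilinearity into four terms, reduce each $e_4$-twisted term using the corresponding rule of Lemma~\ref{lem:diffrules}, and read off the vanishing of the $\mathbb{H}$- and $\mathbb{H}e_4$-components of the direct sum $\mathbb{O}=\mathbb{H}\oplus\mathbb{H}e_4$. The only difference is that the paper leaves part~(b) as ``similar,'' whereas you carry it out explicitly with the correct appearance of $\partial_{\BAR{u}}$ and $\partial_{\BAR{v}}$.
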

\begin{proof}
Let $f=g+he_4$ and $\partial_x=\partial_u+\partial_ve_4$. Then we have
\begin{align*}
\partial_x f&=(\partial_u+\partial_ve_4)(g+he_4)\\
&=\partial_ug+\partial_u(he_4)+(\partial_ve_4)g+(\partial_ve_4)(he_4)\\
&=\partial_ug+(h\partial_u)e_4+(\partial_v\BAR{g})e_4-\BAR{h}\partial_v,
\end{align*}
which gives us (a). Computations for (b) are similar.
\end{proof}
As a special case:
\begin{cor} 
\begin{itemize}
\item[(a)] If $\m{g}=\m{h}=0$, then $\partial_xf=0$ if and only if
\[
\begin{cases}
\partial_ug_0=\partial_vh_0,\\
\partial_uh_0=-\partial_vg_0.
\end{cases}
\]
\item[(b)] If $g_0=h_0=0$, then $\partial_xf=0$ if and only if
\[
\begin{cases}
\partial_u\m{g}=-\m{h}\partial_v,\\
\m{h}\partial_u=\partial_v\m{g}.
\end{cases}
\]
\end{itemize}
\end{cor}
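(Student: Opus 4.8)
The plan is to obtain both statements directly from the preceding Proposition on quaternionic Cauchy-Riemann systems, specifically from its part (a), which already establishes that $\partial_x f=0$ holds if and only if the quaternion-valued system \eqref{eq:qcr1} holds. Since the Corollary merely adds the hypotheses $\m{g}=\m{h}=0$ (in part (a)) or $g_0=h_0=0$ (in part (b)), no new analytic input is required: I would simply substitute these restrictions into \eqref{eq:qcr1} and simplify the conjugations, writing $g=g_0+\m{g}$, $h=h_0+\m{h}$ and $\BAR{g}=g_0-\m{g}$, $\BAR{h}=h_0-\m{h}$.

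First, for part (a) I would record that $\m{g}=\m{h}=0$ means $g=g_0$ and $h=h_0$ are real-scalar functions, so that $\BAR{g}=g_0$ and $\BAR{h}=h_0$. Substituting into the two equations of \eqref{eq:qcr1} turns $\partial_u g=\BAR{h}\partial_v$ into $\partial_u g_0=h_0\partial_v$ and $h\partial_u=-\partial_v\BAR{g}$ into $h_0\partial_u=-\partial_v g_0$. The only point requiring a word of care is that $\BAR{h}\partial_v$ and $h\partial_u$ are a priori \emph{right} actions of the operators, whereas the target system writes everything with the operator on the left; here I would invoke the centrality of the real scalars, namely that a real-valued function commutes past the units $e_1,e_2,e_3$, so that $h_0\partial_v=\partial_v h_0$ and $h_0\partial_u=\partial_u h_0$. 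This converts the equations into the claimed form $\partial_u g_0=\partial_v h_0$ and $\partial_u h_0=-\partial_v g_0$.

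For part (b) the substitution is even more direct. The hypothesis $g_0=h_0=0$ gives $g=\m{g}$ and $h=\m{h}$ with $\BAR{g}=-\m{g}$ and $\BAR{h}=-\m{h}$. Plugging these into \eqref{eq:qcr1}, the first equation $\partial_u g=\BAR{h}\partial_v$ becomes $\partial_u\m{g}=-\m{h}\partial_v$, and the second $h\partial_u=-\partial_v\BAR{g}$ becomes $\m{h}\partial_u=\partial_v\m{g}$, which is exactly the asserted vector system. Here no commuting is needed, because both sides already retain their natural sidedness; only the sign produced by conjugating a pure vector part has to be tracked.

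I expect no genuine obstacle: the result is a specialization of an already-proven equivalence, and its entire content is bookkeeping. The single place where a slip is possible is the reconciliation of left and right operator actions in part (a); keeping in mind that this is legitimate precisely because the surviving components are real, hence central, is what makes the two formulations coincide, and this is the one step I would state explicitly rather than leave to the reader.
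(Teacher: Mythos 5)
Your proposal is correct and matches the paper's intent exactly: the paper offers no separate proof, presenting the Corollary simply ``as a special case'' of the quaternionic Cauchy--Riemann system \eqref{eq:qcr1}, which is precisely the substitution you carry out. Your explicit remark that the real-valued components are central, so the right action $h_0\partial_v$ coincides with the left action $\partial_v h_0$, is the one bookkeeping point the paper leaves implicit, and you handle it correctly.
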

Proposition \ref{prop:mongisharm} implies:
\begin{prop}\label{prop:hgharmonic}
If $g$ and $h\colon\mathbb{H}\times\mathbb{H}\to\mathbb{H}$ satisfy Cauchy-Riemann system \eqref{eq:qcr1} or \eqref{eq:qcr2}, then $g$ and $h$ are harmonic.
\end{prop}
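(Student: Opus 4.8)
The plan is to read the two systems as disguised monogenicity conditions and then apply Proposition \ref{prop:mongisharm}. Indeed, the preceding Quaternionic Cauchy-Riemann systems establish that, for $f = g + he_4$, the system \eqref{eq:qcr1} is equivalent to $\partial_x f = 0$ and the system \eqref{eq:qcr2} is equivalent to $f\partial_x = 0$. Thus a pair $(g,h)$ solving \eqref{eq:qcr1} assembles into a left monogenic $f$, while a pair solving \eqref{eq:qcr2} assembles into a right monogenic $f$. Assuming as usual that $f \in C^2$, Proposition \ref{prop:mongisharm} then yields that $f$ is harmonic, i.e. $\Delta_x f = 0$ with $\Delta_x = \partial_{x_0}^2 + \cdots + \partial_{x_7}^2$ the eight-dimensional Laplacian in the real coordinates $x_0,\ldots,x_7$.

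The second, and only genuinely new, step is to transfer harmonicity from $f$ to its quaternionic parts. The essential point I would stress is that $\Delta_x$ is a scalar real differential operator with constant coefficients: it acts separately on each of the eight real components, and therefore commutes with the fixed right multiplication $a \mapsto ae_4$ and does not interact with the nonassociative product at all. Writing $he_4 = h_0e_4 + h_1e_5 + h_2e_6 + h_3e_7$, one sees that the components of $g$ are the $e_0,\ldots,e_3$ components of $f$ and those of $he_4$ are the $e_4,\ldots,e_7$ components, so that $\Delta_x f = \Delta_x g + (\Delta_x h)e_4$.

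Finally I would invoke the directness of the decomposition $\mathbb{O} = \mathbb{H} \oplus \mathbb{H}e_4$. Since $\Delta_x g \in \mathbb{H}$ and $(\Delta_x h)e_4 \in \mathbb{H}e_4$, the identity $\Delta_x f = 0$ forces each summand to vanish separately: $\Delta_x g = 0$ and $(\Delta_x h)e_4 = 0$. Because right multiplication by $e_4$ is invertible (Lemma \ref{lem:paravecprod}(c) gives $(ae_4)e_4 = -a$, so its square is $-\mathrm{id}$), the second equation delivers $\Delta_x h = 0$, and hence both $g$ and $h$ are harmonic. I do not anticipate a real obstacle here; the only thing that must be said carefully is precisely that $\Delta_x$, being scalar, neither mixes the $\mathbb{H}$ and $\mathbb{H}e_4$ components nor is obstructed by nonassociativity, which is what legitimizes the componentwise splitting despite the awkward multiplication.
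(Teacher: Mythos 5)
Your proposal is correct and follows essentially the same route as the paper, which simply states that Proposition \ref{prop:mongisharm} implies the result: the systems \eqref{eq:qcr1} and \eqref{eq:qcr2} are by construction equivalent to $\partial_x f=0$ and $f\partial_x=0$ for $f=g+he_4$, so $f$ is harmonic, and harmonicity passes to $g$ and $h$ componentwise. Your added care about the scalar Laplacian respecting the direct sum $\mathbb{H}\oplus\mathbb{H}e_4$ is exactly the (unwritten) justification the paper relies on.
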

In this point of view, octonionic analysis is actually two variable quaternionic analysis and there is a natural biaxial behaviour. Since $\mathbb{O}$ is an alternating algebra, i.e., $x(yx)=(xy)x$, we may also define \emph{inframonogenic functions} as in the classical case (see \cite{MPS}) as functions which satisfy the system $\partial_x f\partial_x=0$. For inframonogenic functions we obtain the following equivalent decomposition.

\begin{prop}\label{prop:inframon}
A function $f=g+he_4\in C^2(\mathbb{O},\mathbb{O})$ is inframonogenic if and only if
\begin{align*}
\Delta_v\BAR{g}-\partial_ug\partial_u+\BAR{h}\partial_v\partial_u+\partial_{\BAR{v}}h\partial_u=0,\\
\Delta_uh+\partial_v\BAR{g}\partial_{\BAR{u}}+\partial_v\partial_ug-\partial_v\BAR{h}\partial_v=0.
\end{align*}
\end{prop}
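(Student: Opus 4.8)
The plan is to compute $\partial_x f\partial_x$ directly in quaternionic form, using the rules of $e_4$-calculus collected in Lemma~\ref{lem:diffrules}, and then to read off the two stated equations from the $\mathbb{H}$-part and the $\mathbb{H}e_4$-part of the result. Since the decomposition $\mathbb{O}=\mathbb{H}\oplus\mathbb{H}e_4$ is a direct sum, the single octonionic condition $\partial_x f\partial_x=0$ splits into the simultaneous vanishing of its two quaternionic components, and these will be exactly the two displayed identities.

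First I would record, from the computation in the proof of the quaternionic Cauchy-Riemann systems, that
\[
\partial_x f=G+He_4,\qquad G:=\partial_u g-\BAR{h}\partial_v,\quad H:=h\partial_u+\partial_v\BAR{g}.
\]
Then I would apply $\partial_x=\partial_u+\partial_v e_4$ on the right. Expanding $(G+He_4)(\partial_u+\partial_v e_4)$ into four terms and converting each back to quaternionic form by rules (d), (e), (f) of Lemma~\ref{lem:diffrules} gives
\[
(\partial_x f)\partial_x=G\partial_u+(\partial_v G)e_4+(H\partial_{\BAR{u}})e_4-\partial_{\BAR{v}}H.
\]
Collecting the $\mathbb{H}$-part $G\partial_u-\partial_{\BAR{v}}H$ and the $\mathbb{H}e_4$-part $\partial_v G+H\partial_{\BAR{u}}$ isolates the two quaternionic expressions whose vanishing I must analyze.

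Next I would substitute the explicit $G$ and $H$ and simplify, using the factorizations $\partial_{\BAR{v}}\partial_v=\Delta_v$ and $\partial_u\partial_{\BAR{u}}=\Delta_u$ (the $u$- and $v$-analogues of the Laplacian identity established earlier) together with the fact that $\Delta_u,\Delta_v$ are scalar and hence commute with quaternionic multiplication. From the $\mathbb{H}$-part this yields $\partial_u g\partial_u-\BAR{h}\partial_v\partial_u-\partial_{\BAR{v}}h\partial_u-\Delta_v\BAR{g}$, whose vanishing, after multiplication by $-1$, is the first displayed equation; from the $\mathbb{H}e_4$-part it yields $\Delta_u h+\partial_v\BAR{g}\partial_{\BAR{u}}+\partial_v\partial_u g-\partial_v\BAR{h}\partial_v$, which is the second.

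The one point requiring care — which I expect to be the main source of slips rather than a genuine obstacle — is the consistent bookkeeping of left versus right actions of the operators together with the conjugations that the $e_4$-rules introduce: each of the mixed products $\partial_u g\partial_u$, $\BAR{h}\partial_v\partial_u$, $\partial_{\BAR{v}}h\partial_u$ and $\partial_v\BAR{h}\partial_v$ combines one left and one right action, so signs and bars must be tracked precisely. Underlying this is the legitimacy of writing $\partial_x f\partial_x$ without brackets at all: because second derivatives are symmetric and the octonionic associator is alternating, the two bracketings satisfy $(\partial_x f)\partial_x=\partial_x(f\partial_x)$, so the triple product is unambiguous and the computation above is well defined.
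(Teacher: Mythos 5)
Your proposal is correct and follows essentially the same route as the paper: write $\partial_x f=q_1+q_2e_4$ with $q_1=\partial_ug-\BAR{h}\partial_v$, $q_2=\partial_v\BAR{g}+h\partial_u$, apply $\partial_u+\partial_ve_4$ on the right via rules (d), (e), (f) of Lemma~\ref{lem:diffrules}, and read off the $\mathbb{H}$- and $\mathbb{H}e_4$-components using $\partial_{\BAR{v}}\partial_v=\Delta_v$ and $\partial_u\partial_{\BAR{u}}=\Delta_u$. Your closing remark on the well-definedness of the unbracketed triple product via alternativity is a point the paper takes for granted (it only mentions it when defining inframonogenic functions), so including it is a small improvement rather than a deviation.
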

\begin{proof}
As above,
\begin{align*}
\partial_x f&=q_1+q_2e_4,
\end{align*}
where $q_1=\partial_ug-\BAR{h}\partial_v$ and $q_2=\partial_v\BAR{g}+h\partial_u$.
Using the differentiation rules of Lemma \ref{lem:diffrules} we compute
\begin{align*}
\partial_x f\partial_x&=(q_1+q_2e_4)(\partial_u+\partial_ve_4)\\
&=q_1\partial_u+(q_2e_4)\partial_u+q_1(\partial_ve_4)+(q_2e_4)(\partial_ve_4)\\
&=q_1\partial_u+(q_2\partial_{\BAR{u}})e_4+(\partial_v q_1)e_4-\partial_{\BAR{v}}q_2\\
&=\partial_ug\partial_u-\BAR{h}\partial_v\partial_u-\Delta_v\BAR{g}-\partial_{\BAR{v}}h\partial_u\\
&\quad+(\partial_v\BAR{g}\partial_{\BAR{u}}+\Delta_uh+\partial_v\partial_ug-\partial_v\BAR{h}\partial_v)e_4.\qedhere
\end{align*}
\end{proof}

\subsection{Real Biaxially Radial Solutions -- a Connection to Holomorphic Functions}
We close the paper studying the following example. Let us consider real valued  functions $g$ and $h\colon\mathbb{O}\to\mathbb{R}$ which are axially symmetric, i.e., invariant under the action of the spingroup: for all $q\in S^3$
\begin{equation}
g(u_0,\m{u},v_0,\m{v})=qg(u_0,\BAR{q}\m{u}q,v_0,\BAR{q}\m{v}q)\BAR{q}.
\end{equation}
Then they depend only on $u_0$, $v_0$, $a=|\m{u}|^2$, $b=|\m{v}|^2$ and $c=\langle \m{u},\m{v}\rangle$, see \cite[Section 4]{Sommen}. The function $f=g+he_4$ is monogenic if $g$ and $h$ satisfy the system
\[
\begin{cases}
\partial_ug=\partial_vh,\\
\partial_uh=-\partial_vg.
\end{cases}
\]
Substituting
\begin{align*}
\partial_{\m{u}}g=2\m{u}\partial_ag+\m{v}\partial_cg,\\
\partial_{\m{v}}g=2\m{v}\partial_bg+\m{u}\partial_cg,
\end{align*}
and similarly for $h$, we obtain the system
\begin{align*}
\begin{cases}
2\partial_ag-\partial_ch=0,\\
\partial_cg-2\partial_bh=0,\\
2\partial_ah+\partial_cg=0,\\
\partial_ch+2\partial_bg=0,\\
\partial_{u_0}g-\partial_{v_0}h=0,\\
\partial_{u_0}h+\partial_{v_0}g=0.
\end{cases}
\end{align*}
The first four equations gives us 
\begin{align*}
\partial_a g+\partial_bg=0,\\
\partial_a h+\partial_bh=0.
\end{align*}
Let us look for a solution of the form
\[
g=G(u_0,v_0,a-b,c)\ \text{ and }\ h= H(u_0,v_0,a-b,c).
\]
Substituting these, the first four equations take the form
\begin{align*}
2\partial_dG-\partial_cH=0,\\
2\partial_dH+\partial_cG=0,
\end{align*}
where we denote $d=a-b$. Let us look for a solution in the form $H=e^dp(c,u_0,v_0)$ and $G=e^d q(c,u_0,v_0)$. We get
\begin{align*}
2q-\partial_cp=0,\\
2p+\partial_cq=0.
\end{align*}
The system has a solution
\begin{align*}
p(c,u_0,v_0)&=-\alpha(u_0,v_0)\cos(2c)+\beta(u_0,v_0)\sin(2c),\\
q(c,u_0,v_0)&=\alpha(u_0,v_0)\sin(2c)+\beta(u_0,v_0)\cos(2c),
\end{align*}
i.e.,
\begin{align*}
g&=\alpha(u_0,v_0)e^d\sin(2c)+\beta(u_0,v_0)e^d\cos(2c),\\
h&=-\alpha(u_0,v_0)e^d\cos(2c)+\beta(u_0,v_0)e^d\sin(2c).
\end{align*}
Substituting these to 
\begin{align*}
\partial_{u_0}g-\partial_{v_0}h&=0,\\
\partial_{u_0}h+\partial_{v_0}g&=0,
\end{align*}
we obtain
\begin{align*}
\partial_{u_0}\alpha&=\partial_{v_0}\beta,\\
\partial_{v_0}\alpha&=-\partial_{u_0}\beta.
\end{align*}
This is the Cauchy-Riemann system. We see that if $\alpha+i\beta$ is a holomorphic function, it gives us a monogenic function
\begin{align*}
f(x)=&e^{|\m{u}|^2-|\m{v}|^2}\Big(\alpha(u_0,v_0)\sin(2\langle \m{u},\m{v}\rangle)+\beta(u_0,v_0)\cos(2\langle \m{u},\m{v}\rangle)\\
&+\big(-\alpha(u_0,v_0)\cos(2\langle \m{u},\m{v}\rangle)+\beta(u_0,v_0)\sin(2\langle \m{u},\m{v}\rangle\big)e_4\Big).
\end{align*}

\end{document}